\documentclass[a4paper, reqno, 11pt]{amsart}
\makeatletter

\@addtoreset{equation}{section}
\makeatother
\usepackage{setspace}
\usepackage{xcolor}
\usepackage{braket}
\usepackage[T1]{fontenc}
\everymath{\displaystyle}

\usepackage{amsmath}
\usepackage{amssymb}
\usepackage{latexsym}
\usepackage{amsthm}
\usepackage{textcomp}
\usepackage{hyperref}
\usepackage{mathtools}
\usepackage{here}
\newtheorem{Thm}{Theorem}[section]

\newtheorem{Lem}[Thm]{Lemma}
\newtheorem{Prop}[Thm]{Proposition}

\theoremstyle{definition}

\newtheorem{Rem}[Thm]{Remark}

\begin{document}

\title[multiple point ranges on graphs]{Asymptotic estimates for multiple point ranges of transient random walks on graphs}
\author{Kazuki Okamura}
\date{\today}
\address{Department of Mathematics, Faculty of Science, Shizuoka University, 836, Ohya, Suruga-ku, Shizuoka, 422-8529, JAPAN.}
\email{okamura.kazuki@shizuoka.ac.jp}
\keywords{multiple point range, transient random walk, local time}
\subjclass[2020]{}
\maketitle

\begin{abstract}
We study multiple point ranges for random walks on graphs, extending known asymptotic results obtained for random walks on groups.
A distinctive feature is that algebraic and translation-invariance assumptions are replaced by a uniform tail condition on the first return time to the starting point. 
Under this condition, we obtain upper and lower bounds of linear order for the expectations of the number of sites visited at least a given number of times and the number of sites visited exactly that number of times. 
We also prove the corresponding almost sure bounds under a stronger condition.  
In spatially homogeneous transient cases, these bounds coincide and yield a strong law of large numbers.
We apply these estimates to derive asymptotic results for functions of the local times.
\end{abstract}

\section{Introduction and main results}

The range of a random walk is one of the basic quantities measuring how much of the state space the walk has explored. 
It counts sites without distinguishing how many times they have been visited. 
In order to describe the occupation structure more finely, we count sites according to their number of visits. 
For each $j \ge 1$, we consider the number $R_n^j$ of sites visited at least $j$ times by time $n$, and the number $R_n^{(j)}$ of sites visited exactly $j$ times by time $n$. 
Here $R_n^{1}$ is the single point range. 
These quantities are closely related to self-intersections and to the local times of the walk.

The asymptotic behavior for the multiple point ranges $R_n^{j}$ and $R_n^{(j)}$  has been investigated by several authors. 
For transient random walks, Erd\H{o}s and Taylor \cite{ET1960} showed the strong law of large numbers for the simple random walk on the integer lattice $\mathbb{Z}^d, d \ge 3$. 
Pitt \cite{Pitt1974}  showed the strong law of large numbers for a transient random walk on a countable Abelian group. 
Derriennic \cite{Derriennic1980} considered a more general class of random walks including the case of random walks on countable non-Abelian groups.
Multiple point ranges for the two-dimensional random walk have also been considered by  Flatto \cite{Flatto1976} and Hamana \cite{hamana1997,Hamana1998}. 
These works rely on the homogeneous structure of the underlying space. 
In particular, group structures make it possible to use ergodic methods. 
For random walks on general graphs, such tools do not apply. 
This motivates the use of assumptions formulated in terms of return probabilities, rather than algebraic structure. 

The aim of this paper is to obtain analogues of known asymptotic results for multiple point ranges for random walks on graphs.  
Instead of relying on translation invariance or ergodic arguments, we assume a uniform tail condition for the first return time to the starting point. 
Under this condition, we prove upper and lower bounds of linear order for the expectations of $R_n^j$ and $R_n^{(j)}$.
Furthermore, under a stronger uniform tail condition, we obtain the corresponding almost sure bounds. 
If the probability of returning to the starting point is independent of the starting point and is strictly less than one, 
then the upper and lower bounds coincide and a strong law of large numbers holds. 

The case of sites visited at least once corresponds to the single point range. 
Our results also recover estimates for the single point range, and the uniform tail condition here is weaker than the one used by the author  \cite[Corollary 2.3]{Okamura2014} and by Kumagai and Nakamura \cite[Proposition 5.7]{KN2018}. 
As discussed in Section \ref{sec:examples}, this condition can be verified through on-diagonal heat kernel upper bounds. 
It is therefore applicable to a broad class of transient random walks on graphs.

Multiple point ranges are closely related to functions of the local times. 
For transient random walks on lattices and groups, asymptotic properties of local time functionals have been studied by several authors. 
Becker and K\"{o}nig \cite{BK2009} investigated moments of the local times of transient random walks on $\mathbb Z^d$. 
Asymont and Korshunov \cite{AK2020} considered a more general class of functions of the local times.
Recently, Chang, Chen, Meng and Peng \cite{CCMP2026}  obtained further limit results for such functionals.
These results are proved for integer lattices and groups.

We consider the corresponding problem for random walks on graphs in  Section \ref{sec:local-times}. 
For a non-negative function $f$ with $f(0)=0$, let $G_n(f)$ be the sum obtained by applying  $f$ to the local time of each site visited up to time $n$.
Combining a decomposition by local time levels with our estimates for exact multiple point ranges $R_n^{(j)}$, we obtain asymptotic results for $G_n(f)$ under suitable summability conditions on $f$. 
The local time results are graph analogues of known results in the lattice and group settings and  include an extension of \cite[Theorem 2]{CCMP2026} and of the $L^2$-convergence part of \cite[Theorem 1]{AK2020}.

\subsection{Framework and main results}

Let $\mathcal{X}$ be a countable set. 
Let $(X_n)_{n=0}^{\infty}$ be an irreducible Markov chain on $\mathcal{X}$. 
If $P(X_0 = x) = 1$, then we denote the law of the Markov chain $(X_n)_n$ by $P^x$. 
We denote the expectation with respect to $P^x$ by $E^x$. 

Let $\ell(n,x) \coloneqq |\{i \in \{1,\dots,n\} : X_i = x \}|$. 
For $j \ge 1$ and $n \ge 1$, let 
\[ R_n^{j} \coloneqq \left|\left\{x \in \mathcal{X} : \ell(n,x) \ge  j\right\}\right| \]
and 
\[ R_n^{(j)} \coloneqq \left|\left\{x \in \mathcal{X} : \ell(n,x) =  j\right\}\right|. \]
We remark that $R_n \coloneqq R_n^1$ is the range of the random walk and $R_{n}^{j} = R_{n}^{(j)} + R_n^{j+1}$. 

For $x \in \mathcal{X}$, let 
$T_{x} \coloneqq \inf\{n \ge 1 : X_n = x \}$. 
Let 
\[ F_{\inf} \coloneqq \inf_{x \in \mathcal{X}} P^x (T_x < \infty), \ \textup{ and } \   F_{\sup} \coloneqq \sup_{x \in \mathcal{X}} P^x (T_x < \infty).  \]

We introduce the following uniform tail condition for the first return time to the starting point. 
We say that $(U_0)$ holds if 
\[ \lim_{n \to \infty} \sup_{x \in \mathcal{X}} P_x (n < T_x < \infty) = 0, \]
and that $(U_1)$ holds if 
\[  \sup_{x \in \mathcal{X}} P_x (n < T_x < \infty) = O\left((\log n)^{-1-\delta} \right), \ n \to \infty. \]

\begin{Thm}\label{thm:main-exp}
If $(U_0)$ holds, then, for every $j \ge 1$, 
\begin{align*} 
(F_{\inf})^{j-1} (1-F_{\sup}) &\le  \liminf_{n \to \infty} \frac{\inf_{x \in \mathcal{X}} E^{x} \left[R_n^j \right]}{n} \\
&\le \limsup_{n \to \infty} \frac{\sup_{x \in \mathcal{X}} E^{x} \left[R_n^j \right]}{n}  \le (F_{\sup})^{j-1} (1-F_{\inf}) 
\end{align*}
and 
\begin{align*}  
(F_{\inf})^{j-1} (1-F_{\sup})^2 &\le  \liminf_{n \to \infty} \frac{\inf_{x \in \mathcal{X}} E^{x} \left[R_n^{(j)} \right]}{n} \\
&\le \limsup_{n \to \infty} \frac{\sup_{x \in \mathcal{X}} E^{x} \left[R_n^{(j)} \right]}{n}  \le (F_{\sup})^{j-1} (1-F_{\inf})^2. 
\end{align*}
\end{Thm}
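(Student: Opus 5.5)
The plan is to decompose $R_n^j$ by the time of the $j$-th visit, and then, for each such time, to separate the past (which produced the $j$ visits) from the future (whether a $(j+1)$-st visit occurs before $n$), using the strong Markov property.

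\textbf{Decomposition.} A site $x$ has $\ell(n,x)\ge j$ exactly when there is a unique time $k\le n$ with $X_k=x$ and $\ell(k,x)=j$. Hence
\[ R_n^j=\sum_{k=1}^n 1\{\ell(k,X_k)=j\}. \]
For the exact range, $x$ is counted in $R_n^{(j)}$ exactly when, in addition, $x$ is not revisited during $(k,n]$. So
\[ R_n^{(j)}=\sum_{k=1}^n 1\{\ell(k,X_k)=j,\ X_i\neq X_k \text{ for } k<i\le n\}. \]

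\textbf{Time reversal.} The event $\{\ell(k,X_k)=j\}$ looks backward from time $k$. To make it amenable to the strong Markov property, I would count visits forward from the first visit instead. Write
\[ R_n^j=\sum_{m=1}^n 1\{X_m\notin\{X_1,\dots,X_{m-1}\},\ \ell(n,X_m)\ge j\}, \]
summing over first-visit times $m$. Given $\mathcal F_m$ and $X_m=y$, the conditional probability that $y$ is revisited at least $j-1$ more times in the remaining $n-m$ steps is at most $P^y(T_y<\infty)^{j-1}\le F_{\sup}^{j-1}$, by the strong Markov property at successive returns. For the matching lower bound we need those returns to happen within time $n$. Fix a large $L$. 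For $m\le n-(j-1)L$, the probability is at least
\[ \prod_{i=1}^{j-1}\bigl(P^y(T_y<\infty)-P^y(L<T_y<\infty)\bigr)\ge \bigl(F_{\inf}-\varepsilon_L\bigr)^{j-1}, \]
where $\varepsilon_L\coloneqq\sup_xP^x(L<T_x<\infty)\to0$ by $(U_0)$.

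The condition "revisited exactly $j-1$ times" adds a factor $P^y(T_y=\infty)$ at the end. For the upper bound this factor is at most $1-F_{\inf}$. For the lower bound, requiring $T_y=\infty$ after the last return gives at least $1-F_{\sup}$. This reduces both parts of the theorem to the statement for the range, namely $E^x[R_n]/n$ lies asymptotically between $1-F_{\sup}$ and $1-F_{\inf}$, uniformly in $x$. The $R_n^{(j)}$ bounds then carry one extra factor of $1-F$, matching the squared factors in the statement.

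\textbf{Range estimate.} I would use the last-visit decomposition
\[ R_n=\sum_{k=1}^n 1\{X_i\ne X_k \text{ for } k<i\le n\}. \]
By the Markov property, $E^x[R_n]=\sum_{k=1}^n E^x\bigl[P^{X_k}(T_{X_k}>n-k)\bigr]$. Now $P^y(T_y>n-k)=P^y(T_y=\infty)+P^y(n-k<T_y<\infty)$. The first term lies in $[1-F_{\sup},\,1-F_{\inf}]$. The second is at most $\varepsilon_{n-k}$, and $\frac1n\sum_{k\le n}\varepsilon_{n-k}\to0$ by $(U_0)$ (Cesàro). This gives both range bounds uniformly in $x$. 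The bound for $E^x[R_n^j]$ then follows by summing the conditional bounds over first-visit times. The terms with $m>n-(j-1)L$, together with the error from $\varepsilon_L$ after letting $n\to\infty$ and then $L\to\infty$, contribute $o(n)$.

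\textbf{Main obstacle.} The delicate point is the exact count. It is not enough to have $j-1$ returns and then $T_y=\infty$: in the upper bound we need "no further return before time $n$", which is only bounded by $P^y(T_y>n-\tau)$ with $\tau$ the time of the $j$-th visit. I would handle this as in the range estimate. Write $P^y(T_y>r)\le 1-F_{\inf}+\varepsilon_r$, and control the contribution of $j$-th visits occurring near time $n$. The number of first visits $m$ whose $j$-th visit falls in $(n-L,n]$ is at most $L$ per step. Alternatively, such terms are bounded by summing $\varepsilon_{n-\tau}$ over distinct times $\tau$, which is Cesàro-small. Either way these terms are $o(n)$.
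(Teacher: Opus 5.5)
Your argument is correct, but for $R_n^j$ it takes a different route from the paper.

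The paper indexes everything by the time $k$ of the $j$-th visit. Since $\mathcal{B}_k^{(j)}$ is $\mathcal{F}_k$-measurable, the simple Markov property at the deterministic time $k$ applies directly, so your detour to avoid the ``backward-looking'' event is unnecessary. This gives $1-F_{\sup}(n-k)\le P^x(\mathcal{A}_k^n\mid\mathcal{B}_k^{(j)})\le 1-F_{\inf}(n-k)$, and hence
\[
(1-F_{\sup})E^x[R_n^j]\le E^x[R_n^{(j)}]\le(1-F_{\inf}+\epsilon)E^x[R_n^j]+N_\epsilon .
\]
Via $R_n^{j+1}=R_n^j-R_n^{(j)}$ this becomes a recursion in $j$, and all bounds follow by induction on $j$ from the range estimates.

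You instead index by first-visit times and apply the strong Markov property at successive returns. This gives directly
\[
F_{\inf}(L)^{j-1}E^x[R_{n-(j-1)L}]\le E^x[R_n^j]\le F_{\sup}^{j-1}E^x[R_n],
\]
and demanding ``no return ever'' after the $(j-1)$-th return supplies the extra factor $1-F_{\sup}$ in the exact-count lower bound. No induction is needed. Only your upper bound for $R_n^{(j)}$ coincides with the paper's argument. There, the paper's $N_\epsilon$ term corresponds to your observation that at most $L$ sites have their $j$-th visit in $(n-L,n]$; note this is at most $L$ in total, not ``per step''.

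What each route buys:
\begin{itemize}
\item Yours yields explicit non-asymptotic lower bounds. It is essentially the argument the paper itself uses later for Lemma~\ref{lem:exp-lower-quantitative}.
\item The paper's recursion is shorter and handles both directions with a single conditioning. It also produces the uniform inequality \eqref{eq:exp-unif-upperbound}, which is reused in Section~\ref{sec:local-times}.
\end{itemize}
Your Ces\`aro argument also makes explicit the upper bound $\limsup_n \sup_x E^x[R_n]/n\le 1-F_{\inf}$, which the paper only asserts.
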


If  the Markov chain $(X_n)_n$ is the simple random walk on a vertex-transitive graph or the random walk on a countable group, then $F_{\sup} = F_{\inf}$. 
If  the Markov chain $(X_n)$ is recurrent, then $F_{\sup} = F_{\inf} = 1$. 
$(U_0)$ is identical with the condition in \cite[Lemma 2.1]{Okamura2014}. 

\begin{Thm}\label{thm:main-SLLN}
If $(U_1)$ holds, then, for every $j \ge 1$ and every $x \in \mathcal{X}$, 
the following statements hold $P^x$-a.s.: 
\begin{equation}\label{eq:upper-SLLN-1}
\limsup_{n \to \infty} \frac{R_n^j}{n}  \le (F_{\sup})^{j-1} (1-F_{\inf}). \ 
\end{equation}
\begin{equation}\label{eq:upper-SLLN-2}
\limsup_{n \to \infty} \frac{R_n^{(j)}}{n}  \le (F_{\sup})^{j-1} (1-F_{\inf})^2. 
\end{equation} 
\begin{equation}\label{eq:lower-SLLN-1}
\liminf_{n \to \infty} \frac{R_n^j}{n}  \ge (F_{\inf})^{j-1} (1-F_{\sup}). \ 
\end{equation}
\begin{equation}\label{eq:lower-SLLN-2}
\liminf_{n \to \infty} \frac{R_n^{(j)}}{n}  \ge (F_{\inf})^{j-1} (1-F_{\sup})^2. 
\end{equation} 
\end{Thm}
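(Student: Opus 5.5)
The plan is to prove the almost sure bounds by the classical Erd\H{o}s--Taylor / Jain--Pruitt scheme. We first obtain a variance estimate under $(U_1)$, then apply Chebyshev along a geometric subsequence, and finally interpolate using monotonicity. Theorem \ref{thm:main-exp} is taken as given and supplies the first-order behaviour of the expectations.

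\textbf{Last-visit decomposition.} Write $R_n^j=\sum_{i=1}^n Z_{i,n}$, where $Z_{i,n}$ is the indicator that $X_i$ is visited at least $j$ times in $\{1,\dots,i\}$ and that $i$ is the $j$-th visit to $X_i$. Equivalently, one can use the version in which $i$ is the last visit before time $n$ and at least $j-1$ earlier visits occurred. For the upper bound \eqref{eq:upper-SLLN-1}, fix $m$ and replace $Z_{i,n}$ by the larger indicator $Z_{i}^{(m)}$, which only checks the window $\{i-m,\dots,i+m\}$ for returns. Concretely, $Z_i^{(m)}$ requires at least $j-1$ visits to $X_i$ during $[i-m,i)$ with all successive gaps at most $m$, and no return to $X_i$ during $(i,i+m]$.

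Boundary effects aside, $R_n^j\le \sum_i Z_i^{(m)}+(\text{error})$. The error is controlled by the event that some excursion between consecutive visits to the same point has length in $(m,\infty)$. Its expected contribution is at most $n\,j\sup_xP^x(m<T_x<\infty)$. By the Markov property, $E[Z_i^{(m)}]\le (F_{\sup})^{j-1}(1-F_{\inf}+\sup_xP^x(m<T_x<\infty))$.

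The variables $Z_i^{(m)}$ are $2m$-dependent in the weak sense that $Z_i^{(m)}$ and $Z_k^{(m)}$ are functions of disjoint path segments when $|i-k|>2m$. For a Markov chain this is not independence. So we bound $\mathrm{Var}(\sum_i Z_i^{(m)})$ differently: we split the indices into $2m+1$ residue classes and use that, conditionally on $X$ at the segment endpoints, the blocks are independent. The conditional means are then bounded uniformly in the starting point by the $\sup/\inf$ quantities. A cleaner alternative is to apply Azuma's inequality to the martingale $M_k=\sum_{i\equiv r}\bigl(Z_i^{(m)}-E[Z_i^{(m)}\mid\mathcal F_{i-m}]\bigr)$. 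This gives $\sum_i Z^{(m)}_i\le n(F_{\sup})^{j-1}(1-F_{\inf}+\epsilon_m)+o(n)$ a.s.

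\textbf{Controlling the error term.} This is the main obstacle. The error is the number of indices $i\le n$ whose forward or backward excursion to $X_i$ has length in $(m,\infty)$. We will not keep $m$ fixed but let $m=m_n$ grow slowly. Its expectation is $O(n(\log m_n)^{-1-\delta})$ by $(U_1)$. We apply Markov's inequality along the subsequence $n_k=\lfloor e^{k^{\theta}}\rfloor$ with $m_{n_k}=n_k^{1/2}$, say. The resulting probabilities are $O((\log n_k)^{-1-\delta})=O(k^{-\theta(1+\delta)})$, which is summable once $\theta(1+\delta)>1$. Since $n_{k+1}/n_k\to1$ for $\theta<1$, monotonicity of $R_n^j$ in $n$ interpolates between subsequence times. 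This is exactly why the exponent $1+\delta$ on $\log n$ is what is needed. The main-term concentration uses Chebyshev with variance $O(nm_n)$, which is summable along the same $n_k$.

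\textbf{Lower bounds and exact counts.} For \eqref{eq:lower-SLLN-1}, use the complementary indicator $\tilde Z_i$: the time $i$ is preceded by exactly $j-1$ visits to $X_i$ and followed by no return ever. Here $E\tilde Z_i\ge (F_{\inf})^{j-1}(1-F_{\sup})$ up to a boundary term. We truncate "never returns" to "does not return within $m_n$", which only increases the indicator, and pay the same $(U_1)$-controlled error to return to the true event. Then we repeat the concentration and interpolation argument. For $R_n^{(j)}$ we use the representation counting sites with exactly $j-1$ prior visits and then no return. This introduces one extra factor $(1-F)$ for the "no earlier visit" requirement before the first visit: looking backward, the first visit must not be preceded by a visit. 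Bounds \eqref{eq:upper-SLLN-2} and \eqref{eq:lower-SLLN-2} follow by the same scheme.
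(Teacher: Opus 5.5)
Your treatment of the error term is sound: you count finite excursions longer than $m_n$, apply Markov's inequality, and pass to a subsequence with $\log n_k\asymp k^{\theta}$, $\theta(1+\delta)>1$. This plays the role of the paper's bound on $E_{m,n}$ via Pitt's lemma with $n'=\lfloor n/\log n\rfloor$.

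\textbf{The main-term concentration fails.} The variance bound $O(nm_n)$ is false in general, because the hypotheses allow inhomogeneous chains for which $R_n^j/n$ has a non-degenerate random limit. Take simple random walk on a $3$-regular and a $4$-regular tree joined by one edge. Here $(U_1)$ holds, since non-amenability gives $p_n(x,x)\le\rho^n$ with $\rho<1$. Yet $R_n/n$ tends to $1/2$ or to $2/3$, each with positive probability, according to the tree in which the walk eventually stays. So $\mathrm{Var}(R_n)$, and likewise the variance of $\sum_i Z_i^{(m)}$, is of order $n^2$. Conditional independence of the segments given their endpoints does not help, since the conditional means depend on the endpoints.

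The Azuma variant only concentrates $\sum_i Z_i^{(m)}$ around $\sum_i E[Z_i^{(m)}\mid\mathcal F_{i-m}]$. You would then need $E[Z_i^{(m)}\mid\mathcal F_{i-m}]\le (F_{\sup})^{j-1}(1-F_{\inf})+\epsilon_m$ uniformly in $X_{i-m}$. For $j\ge 2$ this is not available: $Z_i^{(m)}$ contains the backward event that $X_i$ was already visited, and the Markov property gives no per-index bound on it in terms of return probabilities. Already $P^y(X_2\in\{X_0,X_1\})$ can exceed $F_{\sup}$, e.g.\ for a hub $y$ joined to many petals, each carrying a self-loop and an edge back to $y$. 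The constant $(F_{\sup})^{j-1}(1-F_{\inf})$ holds only for sums over long stretches, as in Theorem \ref{thm:main-exp}. The same objection applies to your per-index bound on $E[Z_i^{(m)}]$.

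\textbf{The lower bound uses the wrong indicator.} For \eqref{eq:lower-SLLN-1}, your $\tilde Z_i$ equals $\mathbf{1}_{\mathcal{B}_i^{(j)}\cap\mathcal{A}_i^{\infty}}$, which marks sites visited exactly $j$ times in total. Hence $\sum_{i\le n}\tilde Z_i\le R_n^{(j)}$, whose mean is at most $n(F_{\sup})^{j-1}(1-F_{\inf})^2(1+o(1))$. If $F_{\inf}=F_{\sup}=F\in(0,1)$, this is strictly below $nF^{j-1}(1-F)$. So $E\tilde Z_i\ge (F_{\inf})^{j-1}(1-F_{\sup})$ fails even on average, and $\tilde Z_i$ only yields the constant in \eqref{eq:lower-SLLN-2}.

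\textbf{A repair within your framework.} Take conditional expectations of forward-looking factors only. Since $\mathbf{1}_{\mathcal{B}_i^{(j)}}$ is $\mathcal F_i$-measurable, the variables $\mathbf{1}_{\mathcal{B}_i^{(j)}}\bigl(\mathbf{1}_{\mathcal{A}_i^{i+m}}-P^{X_i}(T_{X_i}>m)\bigr)$, $i\equiv r \pmod m$, are bounded martingale differences. Azuma, together with $\mathcal{A}_i^{\infty}\subset\mathcal{A}_i^{n}\subset\mathcal{A}_i^{i+m}$ for $i\le n-m$ and $m=m_n$ as in your error analysis, gives almost surely
$R_n^{(j)}\le(1-F_{\inf}(m))R_n^j+o(n)$ and
$R_n^{(j)}\ge(1-F_{\sup})R_n^j-o(n)-\sum_{i\le n}\mathbf{1}_{\mathcal{A}_i^{i+m}\setminus\mathcal{A}_i^{\infty}}$.
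The last sum is exactly your $(U_1)$-controlled error. Combine this with $R_n^{j+1}=R_n^j-R_n^{(j)}$ and the case $j=1$, where $R_n=\sum_{i\le n}\mathbf{1}_{\mathcal{A}_i^{n}}$. This gives all four bounds by a pathwise version of the induction in the proof of Theorem \ref{thm:main-exp}.

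The paper avoids per-index bounds differently. It conditions only at the endpoints of $m$ blocks of length $n$. It bounds exponential moments of the block counts $T^j_{k,n}$ uniformly in the starting point using Theorem \ref{thm:main-exp}. It then charges the overlaps $E_{m,n}$ to long finite excursions.
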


In Section \ref{sec:examples}, we give a sufficient condition for $(U_1)$ in terms of heat kernel. 
To our knowledge, the Nash-type on-diagonal heat kernel upper bounds needed to verify $(U_1)$ are not known to hold for all transient random walks on countable groups.
However, under additional assumptions, Nash-type bounds and related estimates have been studied extensively; see Saloff-Coste and Zheng  \cite{SCZ2016} for example. 
We remark that there exist infinite vertex-transitive graphs which are not Cayley graphs of groups, as shown by Woess \cite{Woess2013}. 

\section{Proofs}

In this section, we prove Theorems \ref{thm:main-exp} and \ref{thm:main-SLLN}. 
As an outline, we follow \cite{Pitt1974}; in particular, we use induction on $j$ in the estimates of expectations. 
However, we cannot apply the ergodic theorem and several parts of the argument have to be changed significantly. 
The main difficulty is to estimate $E_{m,n}$ defined in \eqref{eq:Emn} below.
In \cite{Pitt1974},  this part is handled by the ergodic theorem; here we use the uniform tail condition $(U_1)$ instead. 

\subsection{Proof of Theorem \ref{thm:main-exp}} 

Let $\mathcal{A}_{k}^{n} \coloneqq \left\{X_k \ne X_{\ell}, k+1 \le \ell \le n \right\}$ for $k < n$ and $\mathcal{A}_{n}^{n}$ be the whole event. 
Let $\mathcal{B}_{k}^{(j)} \coloneqq \left\{   |\{i \in \{1,\dots,k\} : X_i = X_{k} \}| =  j \right\}$. 
We remark that $\mathcal{B}_{k}^{(j)} = \emptyset$ if $j > k$.  
Then, 
\[ R^{(j)}_{n} = \sum_{k=1}^{n} {\bf 1}_{\mathcal{B}_{k}^{(j)} \cap \mathcal{A}_{k}^{n}}, \textup{ and } R^{j}_{n} = \sum_{k=1}^{n} {\bf 1}_{\mathcal{B}_{k}^{(j)}}. \] 
Hence, for every $x \in \mathcal{X}$, 
\begin{equation}\label{eq:basic-exp-1} 
E^x \left[ R^{(j)}_{n} \right] = \sum_{k=1}^{n} P^x \left(\mathcal{B}_{k}^{(j)} \cap \mathcal{A}_{k}^{n}\right) \textup{ and } E^x \left[ R^{j}_{n} \right] = \sum_{k=1}^{n} P^x \left(\mathcal{B}_{k}^{(j)} \right). 
\end{equation}

For $n \ge 0$, let  
\[ F_{\inf} (n) \coloneqq \inf_{x \in \mathcal{X}} P^x (T_x \le n) \ \textup{ and } \   F_{\sup} (n) \coloneqq \sup_{x \in \mathcal{X}} P^x (T_x \le n).  \]

Then, by the Markov property, 
\begin{equation}\label{eq:basic-Markov-ineq} 
1 - F_{\sup}(n-k) \le P^x \left( \mathcal{A}_{k}^{n} \, \middle| \, \mathcal{B}_{k}^{(j)} \right) \le 1 - F_{\inf}(n-k), \ x \in \mathcal{X}. 
\end{equation}

First, we deal with the upper bounds. 
By \eqref{eq:basic-exp-1} and \eqref{eq:basic-Markov-ineq}, 
\[ E^x \left[ R^{(j)}_{n} \right] \ge \sum_{k=1}^{n} P^x \left(\mathcal{B}_{k}^{(j)} \right) (1- F_{\sup}(n-k)), \ x \in \mathcal{X}. \]
Using this, \eqref{eq:basic-exp-1}, and the fact that $F_{\sup}(m) \le F_{\sup}$ for each $m \ge 1$,   
it holds that 
\begin{equation}\label{eq:comparision-ineq-1} 
E^x \left[ R^{(j)}_{n} \right] \ge (1-F_{\sup}) E^x \left[ R^{j}_{n} \right], \ x \in \mathcal{X}. 
\end{equation}
Since $R_n^j - R_n^{j+1} = R_n^{(j)}$, 
\[ E^x \left[ R^{j+1}_{n} \right] \le F_{\sup} E^x \left[ R^{j}_{n} \right], \ x \in \mathcal{X}, \]
and hence, 
\begin{equation}\label{eq:exp-unif-upperbound} 
\sup_{x \in \mathcal{X}} E^x \left[ R^{j+1}_{n} \right] \le F_{\sup}  \sup_{x \in \mathcal{X}} E^x \left[ R^{j}_{n} \right]. 
\end{equation}

We see that 
\[ \limsup_{n \to \infty} \frac{\sup_{x \in \mathcal{X}} E^{x} \left[R_n \right]}{n} \le 1 - F_{\inf}. \]
Hence, by induction on $j$, 
\begin{equation}\label{eq:upper-exp-1} 
\limsup_{n \to \infty} \frac{\sup_{x \in \mathcal{X}} E^{x} \left[R_n^j \right]}{n} \le (F_{\sup})^{j-1} (1 - F_{\inf}). 
\end{equation}

We consider $E^x \left[ R^{(j)}_{n} \right]$. 
By \eqref{eq:basic-exp-1} and \eqref{eq:basic-Markov-ineq}, 
\begin{equation}\label{eq:upper-exp-2}  
E^x \left[ R^{(j)}_{n} \right] \le  \sum_{k=1}^{n} P^x \left(\mathcal{B}_{k}^{(j)} \right) (1- F_{\inf}(n-k)),  \ x \in \mathcal{X}.  
\end{equation}
By $(U_0)$, it holds that 
\[ \lim_{n \to \infty} F_{\inf} (n) = F_{\inf} \ \textup{ and } \  \lim_{n \to \infty} F_{\sup} (n) = F_{\sup}. \]

Let $\epsilon > 0$. 
Let $N_{\epsilon}$ be an integer such that for every $n > N_{\epsilon}$, $F_{\inf}(n) \ge F_{\inf} - \epsilon$. 
Then, for every $n > N_{\epsilon}$ and $x \in \mathcal{X}$, 
\begin{equation*}
\sum_{k=1}^{n} P^x \left(\mathcal{B}_{k}^{(j)} \right) (1- F_{\inf}(n-k)) \le (1 - F_{\inf} + \epsilon) \sum_{k=1}^{n - N_{\epsilon}}  P^x \left(\mathcal{B}_{k}^{(j)} \right) + N_{\epsilon}. 
\end{equation*}
Combining this with \eqref{eq:upper-exp-2} and \eqref{eq:basic-exp-1}, 
\begin{equation}\label{eq:comparision-ineq-2-1} 
E^x \left[ R^{(j)}_{n} \right]  \le  (1 - F_{\inf} + \epsilon) E^x \left[ R^{j}_{n} \right]  + N_{\epsilon}, \ n > N_{\epsilon}, \ x \in \mathcal{X}.  
\end{equation}
By this and \eqref{eq:upper-exp-1},
\[ \limsup_{n \to \infty} \frac{\sup_{x \in \mathcal{X}} E^{x} \left[R_n^{(j)} \right]}{n}  \le (1 - F_{\inf} + \epsilon)  (F_{\sup})^{j-1} (1 - F_{\inf}).   \]
Letting $\epsilon \to +0$, 
\[ \limsup_{n \to \infty} \frac{\sup_{x \in \mathcal{X}} E^{x} \left[R_n^{(j)} \right]}{n}  \le (F_{\sup})^{j-1} (1 - F_{\inf})^2.   \]

We now deal with the lower bound. 
By \eqref{eq:comparision-ineq-2-1} and $R_n^j - R_n^{j+1} = R_n^{(j)}$, 
\[ (F_{\inf} - \epsilon) E^x \left[ R^{j}_{n} \right] \le E^x \left[ R^{j+1}_{n} \right] + N_{\epsilon}, \ n > N_{\epsilon}, \ x \in \mathcal{X}. \]
Hence, 
\[ (F_{\inf} - \epsilon) \inf_{x \in \mathcal{X}} E^x \left[ R^{j}_{n} \right] \le \inf_{x \in \mathcal{X}} E^x \left[ R^{j+1}_{n} \right] + N_{\epsilon}, \ n > N_{\epsilon}. \]
Dividing by $n$ and letting $n \to \infty$, 
\[ (F_{\inf} - \epsilon) \liminf_{n \to \infty} \frac{\inf_{x \in \mathcal{X}} E^{x} \left[R_n^j \right]}{n} \le \liminf_{n \to \infty} \frac{\inf_{x \in \mathcal{X}} E^{x} \left[R_n^{j+1} \right]}{n}. \]
Letting $\epsilon \to +0$, 
\[ F_{\inf} \liminf_{n \to \infty} \frac{\inf_{x \in \mathcal{X}} E^{x} \left[R_n^j \right]}{n} \le \liminf_{n \to \infty} \frac{\inf_{x \in \mathcal{X}} E^{x} \left[R_n^{j+1} \right]}{n}. \]

By using the last-exit decomposition as in the proof of  \cite[Theorem 1.2]{Okamura2014}, 
we obtain that
\begin{equation}\label{eq:exp-lower-most-basic} 
E^{x} \left[R_n \right] \ge \sum_{k=1}^{n} \inf_{y \in \mathcal{X}} P^y (T_y \ge k) \ge n \inf_{y \in \mathcal{X}} P^y (T_y = \infty) = n(1 - F_{\sup}). 
\end{equation}

It follows that 
\[ \liminf_{n \to \infty} \frac{\inf_{x \in \mathcal{X}} E^{x} \left[R_n \right]}{n} \ge 1 - F_{\sup}. \]
Therefore, by induction on $j$, 
\begin{equation*}\label{eq:lower-exp-1} 
\liminf_{n \to \infty} \frac{\inf_{x \in \mathcal{X}} E^{x} \left[R_n^j \right]}{n} \ge (F_{\inf})^{j-1} (1 - F_{\sup}). 
\end{equation*}
By this and \eqref{eq:comparision-ineq-1}, 
\[ \liminf_{n \to \infty} \frac{\inf_{x \in \mathcal{X}} E^{x} \left[R_n^{(j)} \right]}{n} \ge (F_{\inf})^{j-1} (1 - F_{\sup})^2. \]
This completes the proof. 

\subsection{Proof of Theorem \ref{thm:main-SLLN}}

We first deal with the upper estimates. 

Let 
\[ T^{j}_{k,n} \coloneqq \left|\left\{x \in \mathcal{X} \colon \ell(kn,x) - \ell((k-1)n,x) \ge  j \right\}\right|  \]
and 
\[ T^{(j)}_{k,n} \coloneqq \left|\left\{x \in \mathcal{X} \colon  \ell(kn,x) - \ell((k-1)n,x) =  j \right\}\right|.  \]
Let 
$\mathcal{R}_{k,n} \coloneqq \left\{X_{i} \colon (k-1)n + 1 \le i \le kn \right\}$  
and  
\begin{equation}\label{eq:Emn} 
E_{m,n} \coloneqq \left|\bigcup_{k_1, k_2 \in \{1, \dots, m\}, k_1 \ne k_2} \mathcal{R}_{k_1, n} \cap \mathcal{R}_{k_2, n} \right|. 
\end{equation}

Then, 
\[ R_{mn}^{j} \le \sum_{k=1}^{m}   T^{j}_{k,n} + E_{m,n}, \ \textup{ and }  \  R_{mn}^{(j)} \le \sum_{k=1}^{m}   T^{(j)}_{k,n} + E_{m,n}. \]
For notational convenience, let 
\[ F^{(u)}_{j} \coloneqq (F_{\sup})^{j-1} (1-F_{\inf}), \ \textup{ and }  \ F^{(u)}_{(j)} \coloneqq (F_{\sup})^{j-1} (1-F_{\inf})^2. \]
Then, for every $\epsilon > 0$, 
\[ P^x \left(R_{mn}^{j} \ge mn(F_j^{(u)} + \epsilon)\right) \le P^x \left(  \sum_{k=1}^{m}   T^{j}_{k,n} \ge mn \left(F_j^{(u)} + \frac{\epsilon}{2} \right) \right) + P^x \left( E_{m,n} \ge mn \frac{\epsilon}{2} \right). \]

We first give an upper bound for  $\displaystyle P^x \left(  \sum_{k=1}^{m}   T^{j}_{k,n} \ge mn \left(F_j^{(u)} + \frac{\epsilon}{2} \right) \right)$. 

\begin{Lem}\label{lem:Laplace-estimate-bdd}
For every $\epsilon > 0$, there exists $\theta(\epsilon) > 0$ such that 
for every random variable $Y$ on a probability space satisfying $|Y| \le 1$, 
\begin{equation}\label{eq:every-bdd-rv-Laplace}
E\left[\exp(\theta(\epsilon) Y) \right] \le 1 + \theta(\epsilon) (E[Y] + \epsilon) \le \exp\left(  \theta(\epsilon) (E[Y] + \epsilon) \right). 
\end{equation}
\end{Lem}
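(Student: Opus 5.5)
We prove the first inequality in \eqref{eq:every-bdd-rv-Laplace} by a second-order Taylor expansion of $t \mapsto e^{t}$ that is uniform over $|t| \le 1$. The second inequality is just $1+u \le e^u$ applied to $u = \theta(\epsilon)(E[Y]+\epsilon)$.

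The key elementary estimate is the following. For $\theta \in (0,1]$ and $|y| \le 1$, Taylor's theorem with Lagrange remainder gives
\[ e^{\theta y} = 1 + \theta y + \frac{(\theta y)^2}{2} e^{\xi} \]
for some $\xi$ between $0$ and $\theta y$. Since $|\xi| \le 1$, this yields
\[ e^{\theta y} \le 1 + \theta y + \frac{e}{2}\theta^2 y^2 \le 1 + \theta y + \frac{e}{2}\theta^2 . \]
The bound is uniform in $y \in [-1,1]$ and does not depend on the distribution of $Y$. That uniformity is exactly why $\theta(\epsilon)$ can be chosen independently of $Y$.

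We then take expectations. Because $|Y| \le 1$, all quantities are bounded and integrable, and we get
\[ E[e^{\theta Y}] \le 1 + \theta E[Y] + \frac{e}{2}\theta^2 = 1 + \theta\Bigl(E[Y] + \frac{e}{2}\theta\Bigr). \]
Now choose $\theta(\epsilon) \coloneqq \min\{1, 2\epsilon/e\}$. Then $\frac{e}{2}\theta(\epsilon) \le \epsilon$, and since $\theta(\epsilon) > 0$, this gives $E[\exp(\theta(\epsilon)Y)] \le 1 + \theta(\epsilon)(E[Y]+\epsilon)$.

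Finally, $1+u \le e^u$ holds for every real $u$, so no sign condition on $E[Y]+\epsilon$ is needed. This gives the right-hand inequality. No step is a genuine obstacle; the only point requiring care is that the remainder bound is uniform, so that $\theta$ depends only on $\epsilon$. This uniformity is what later allows the lemma to be applied to normalized variables such as $T^{j}_{k,n}/n$, conditionally on $\mathcal{F}_{(k-1)n}$, with a single $\theta$.
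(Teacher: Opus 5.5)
Your proof is correct and follows essentially the same route as the paper. Both bound the second-order remainder of the exponential uniformly over $|y| \le 1$ and then take $\theta$ small: the paper uses the power series bound $E[e^{\theta Y}] \le 1 + \theta E[Y] + \frac{\theta^2}{2}e^{\theta}$ with $\theta e^{\theta} = \epsilon$, you use the Lagrange remainder with the explicit choice $\theta(\epsilon) = \min\{1, 2\epsilon/e\}$, and both finish with $1+u \le e^u$.
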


\begin{proof}
By the Lebesgue convergence theorem and the fact that $E[Y^n] \le 1$, 
\[  E\left[\exp(\theta(\epsilon) Y) \right] = \sum_{n=0}^{\infty} \frac{\theta^n}{n!} E[Y^n] \le 1 + \theta E[Y] + \frac{\theta^2}{2} \exp(\theta).    \]
Let $\theta(\epsilon)$ be a positive constant such that 
$\theta \exp(\theta) = \epsilon$. 
Then, using the inequality $1+x \le \exp(x)$, 
we have \eqref{eq:every-bdd-rv-Laplace}. 
\end{proof}

\begin{Lem}\label{lem:Laplace-estimate-multi-range}
For every $\epsilon > 0$, there exists  $N(\epsilon) \in \mathbb{N}$ such that 
for every $n \ge N(\epsilon)$, 
\begin{equation}\label{eq:multi-range-Laplace}
\sup_{x \in \mathcal{X}} E^x \left[\exp\left(\frac{\theta(\epsilon/2)}{n} R_n^j \right) \right] \le \exp\left(\theta(\epsilon/2) (F_j^{(u)} + \epsilon) \right),
\end{equation}
where $\theta (\cdot)$ is the function appearing in Lemma \ref{lem:Laplace-estimate-bdd}.  
\end{Lem}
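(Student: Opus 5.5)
Apply Lemma \ref{lem:Laplace-estimate-bdd} with $Y = R_n^j/n$ and with $\epsilon/2$ in place of $\epsilon$. Since $0 \le R_n^j \le R_n \le n$, we have $|Y| \le 1$ under every $P^x$. Hence, for every $x \in \mathcal{X}$ and every $n$,
\[ E^x\left[\exp\left(\frac{\theta(\epsilon/2)}{n} R_n^j\right)\right] \le \exp\left(\theta(\epsilon/2)\left(\frac{E^x[R_n^j]}{n} + \frac{\epsilon}{2}\right)\right). \]
The key point is that $\theta(\epsilon/2)$ comes from Lemma \ref{lem:Laplace-estimate-bdd}. It depends only on $\epsilon$, not on $x$, $n$ or the law of $Y$. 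So this inequality is uniform in $x$, and taking the supremum over $x$ only changes $E^x[R_n^j]$ into $\sup_{x} E^x[R_n^j]$ on the right-hand side. The exponential is monotone, so
\[ \sup_{x \in \mathcal{X}} E^x\left[\exp\left(\frac{\theta(\epsilon/2)}{n} R_n^j\right)\right] \le \exp\left(\theta(\epsilon/2)\left(\frac{\sup_{x} E^x[R_n^j]}{n} + \frac{\epsilon}{2}\right)\right). \]

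It remains to control $\sup_x E^x[R_n^j]/n$ for large $n$. Condition $(U_1)$ implies $(U_0)$, so the upper bound \eqref{eq:upper-exp-1} from the proof of Theorem \ref{thm:main-exp} applies:
\[ \limsup_{n\to\infty} \frac{\sup_x E^x[R_n^j]}{n} \le (F_{\sup})^{j-1}(1-F_{\inf}) = F_j^{(u)}. \]
Hence there is $N(\epsilon)$ such that $\sup_x E^x[R_n^j]/n \le F_j^{(u)} + \epsilon/2$ for all $n \ge N(\epsilon)$. Substituting this into the display above gives exactly \eqref{eq:multi-range-Laplace}.

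No step is a real obstacle; the proof is a direct combination of the two earlier bounds. The only points to check are the uniformity of $\theta$ in Lemma \ref{lem:Laplace-estimate-bdd}, which holds because the second-order remainder there is bounded by $\frac{\theta^2}{2}e^{\theta}$ regardless of the distribution of $Y$, and the fact that $\sup_x E^x[R_n^j]/n$ has the right limsup, which is already established in \eqref{eq:upper-exp-1}.
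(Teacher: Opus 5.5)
Your proof is correct and follows the paper's argument. You apply Lemma \ref{lem:Laplace-estimate-bdd} to $Y = R_n^j/n$ with $\epsilon/2$, then use the upper bound of Theorem \ref{thm:main-exp} (available because $(U_1)$ implies $(U_0)$) to get $\sup_x E^x[R_n^j]/n \le F_j^{(u)} + \epsilon/2$ for large $n$; your explicit passage to $\sup_x E^x[R_n^j]$ on the right-hand side is slightly cleaner than the paper's write-up.
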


\begin{proof}
Since $0 \le R^j_n \le n$, 
by Lemma \ref{lem:Laplace-estimate-bdd}, 
\[ \sup_{x \in \mathcal{X}} E^x \left[\exp\left(\frac{\theta(\epsilon/2)}{n} R_n^j \right) \right] \le \exp\left(  \theta(\epsilon/2) \left(\frac{E^x \left[R_n^j \right] }{n} + \frac{\epsilon}{2} \right) \right). \]
By Theorem \ref{thm:main-exp}, 
there exists $N(\epsilon) \in \mathbb{N}$ such that 
for every $n \ge N(\epsilon)$, 
$\displaystyle \frac{E^x \left[R_n^j \right] }{n} \le F_j^{(u)} + \frac{\epsilon}{2}$. 
Hence, 
\eqref{eq:multi-range-Laplace} holds for every $n \ge N(\epsilon)$. 
\end{proof}

\begin{Lem}\label{lem:upper-Markov-tail}
For every $\epsilon > 0$, there exist $C(\epsilon) > 0$  and $N_1 (\epsilon) \in \mathbb{N}$ such that 
for every $n \ge N_1 (\epsilon)$ and every $m \in \mathbb{N}$, 
\begin{equation*}\label{eq:multi-range-tail}
\sup_{x \in \mathcal{X}} P^x \left(  \sum_{k=1}^{m}   T^{j}_{k,n} \ge mn \left(F_j^{(u)} + \frac{\epsilon}{2} \right) \right)  \le \exp\left(-C(\epsilon) m \right). 
\end{equation*}
\end{Lem}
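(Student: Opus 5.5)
This is an exponential Chebyshev argument combined with the Markov property applied block by block. The key observation is that $T^{j}_{k,n}$ depends only on the path segment $(X_{(k-1)n+1},\dots,X_{kn})$. Given $\mathcal{F}_{(k-1)n}$, it has the law of $R^j_n$ under $P^{X_{(k-1)n}}$. To see this, note that $\ell(kn,y)-\ell((k-1)n,y)$ counts visits at times $(k-1)n+1,\dots,kn$, which are exactly the times $1,\dots,n$ of the shifted chain started at $X_{(k-1)n}$.

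Fix $\epsilon>0$ and set $\theta=\theta(\epsilon/4)$, with $\theta(\cdot)$ as in Lemma \ref{lem:Laplace-estimate-bdd}. Apply Lemma \ref{lem:Laplace-estimate-multi-range} with $\epsilon/2$ in place of $\epsilon$, and let $N_1(\epsilon)\coloneqq N(\epsilon/2)$. Then for $n\ge N_1(\epsilon)$,
\[ \sup_{y\in\mathcal{X}} E^y\left[\exp\left(\frac{\theta}{n}R_n^j\right)\right]\le \exp\left(\theta\left(F_j^{(u)}+\frac{\epsilon}{2}\right)\right). \]
This is not yet enough, because we need a strict gap against the threshold $F_j^{(u)}+\epsilon/2$. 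So I will instead apply Lemma \ref{lem:Laplace-estimate-multi-range} with $\epsilon/4$, using $\theta=\theta(\epsilon/8)$ and $N_1(\epsilon)=N(\epsilon/4)$. This gives the one-block bound $\exp(\theta(F_j^{(u)}+\epsilon/4))$, uniformly in the starting point.

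Next, by conditioning successively on $\mathcal{F}_{(m-1)n},\mathcal{F}_{(m-2)n},\dots$ and using the Markov property together with the uniformity in the starting point, I get by induction on $m$
\[ \sup_{x} E^x\left[\exp\left(\frac{\theta}{n}\sum_{k=1}^m T^j_{k,n}\right)\right]\le \exp\left(m\theta\left(F_j^{(u)}+\frac{\epsilon}{4}\right)\right). \]
The induction step works as follows. The product of the first $m-1$ factors is $\mathcal{F}_{(m-1)n}$-measurable. The conditional expectation of the last factor is $E^{X_{(m-1)n}}[\exp(\theta R^j_n/n)]$, which is bounded by the uniform supremum.

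Finally, Markov's (Chebyshev's exponential) inequality gives
\[ P^x\left(\sum_{k=1}^m T^j_{k,n}\ge mn\left(F_j^{(u)}+\frac{\epsilon}{2}\right)\right)\le \exp\left(-m\theta\left(F_j^{(u)}+\frac{\epsilon}{2}\right)\right)\exp\left(m\theta\left(F_j^{(u)}+\frac{\epsilon}{4}\right)\right)=\exp\left(-\frac{\theta\epsilon}{4}m\right). \]
So we may take $C(\epsilon)=\theta(\epsilon/8)\,\epsilon/4$.

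The only delicate points are the following.
\begin{itemize}
\item The choice of $\epsilon$-scales must leave a positive gap between the Laplace-transform bound and the threshold.
\item The uniformity in the starting point is essential, because the blocks start at random positions $X_{(k-1)n}$. Lemma \ref{lem:Laplace-estimate-multi-range} provides this via the supremum over $x$ in Theorem \ref{thm:main-exp}.
\end{itemize}
No independence of blocks is needed, only this conditional bound.
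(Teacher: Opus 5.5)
Your proposal is correct and is essentially the paper's proof. Both use exponential Chebyshev with $\lambda=\theta(\epsilon/8)/n$, then the Markov property block by block together with the uniform-in-starting-point bound of Lemma \ref{lem:Laplace-estimate-multi-range} at level $\epsilon/4$, and you arrive at the same constants $C(\epsilon)=\frac{\epsilon}{4}\theta(\epsilon/8)$ and $N_1(\epsilon)=N(\epsilon/4)$. The only difference is that you write out the conditioning induction on $\mathcal{F}_{(m-1)n}$, which the paper compresses into ``by the Markov property.''
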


\begin{proof}
By the Markov property, for every $\lambda > 0$ and $x \in \mathcal{X}$, 
\[  P^x \left(  \sum_{k=1}^{m}   T^{j}_{k,n} \ge mn \left(F_j^{(u)} + \frac{\epsilon}{2} \right) \right) \le \exp\left(-\lambda mn \left(F_j^{(u)} + \frac{\epsilon}{2} \right) \right) E^x \left[ \prod_{k=1}^{m} \exp\left( \lambda T^{j}_{k,n}  \right)  \right] \]
\[ \le \left( \exp\left(-\lambda n \left(F_j^{(u)} + \frac{\epsilon}{2} \right) \right) \sup_{x \in \mathcal{X}} E^x \left[ \exp(\lambda R^j_n) \right] \right)^m. \]
Let $n \ge N(\epsilon/4)$ and $\lambda = \frac{1}{n} \theta(\epsilon/8)$. 
Then, by Lemma \ref{lem:Laplace-estimate-multi-range}, 
\[  \exp\left(-\lambda n \left(F_j^{(u)} + \frac{\epsilon}{2} \right) \right) \sup_{x \in \mathcal{X}} E^x \left[ \exp(\lambda R^j_n) \right] \le \exp\left(- \frac{\epsilon}{4} \theta(\epsilon/8)\right). \]
Thus the assertion holds for $\displaystyle C(\epsilon) \coloneqq \frac{\epsilon}{4} \theta(\epsilon/8)$ and $N_1 (\epsilon) \coloneqq N(\epsilon/4)$.
\end{proof}

Second, we give an upper bound for  $\displaystyle P^x \left( E_{m,n} \ge mn \frac{\epsilon}{2} \right)$. 
Let 
\[ \mathcal{A}_{k}^{\infty} \coloneqq \left\{X_{\ell} \ne X_k,  \ell \ge k+1 \right\} = \bigcap_{n \ge k+1} \mathcal{A}_{k}^n.\]  
Then, by the same argument as in \cite[Proof of Lemma]{Pitt1974}, 
it holds that for $n^{\prime} < n$, 
\[ E_{m,n} \le \sum_{k=1}^{mn} {\bf 1}_{\mathcal{A}_{k}^{k+n^{\prime}} \setminus \mathcal{A}_{k}^{\infty}} + mn^{\prime}. \]

Henceforth, we denote the integer part of a real number $x$ by $\lfloor x \rfloor$. 
For $n \ge 2$, let 
\begin{equation}\label{eq:def-n-prime}
n^{\prime} \coloneqq \left\lfloor \frac{n}{\log n} \right\rfloor. 
\end{equation}
Then, there exists $N_2 (\epsilon) \in \mathbb{N}$ such that for every $n \ge N_2 (\epsilon)$, $n^{\prime}/n < \epsilon/4$. 
Hence, it holds that for every $n \ge N_2 (\epsilon)$ and every $m \ge 1$, 
\begin{equation}\label{eq:Emn-upper} 
P^x \left( E_{m,n} \ge mn \frac{\epsilon}{2} \right) \le P^x \left(  \sum_{k=1}^{mn} {\bf 1}_{\mathcal{A}_{k}^{k+n^{\prime}} \setminus \mathcal{A}_{k}^{\infty}} \ge mn \frac{\epsilon}{4} \right) \le \frac{4}{\epsilon} \sup_{x \in \mathcal{X}} P^x (n^{\prime} < T_x < \infty). 
\end{equation}

By $(U_1)$, there exists a constant $C_0$ such that 
\begin{equation}\label{eq:c0}
\sup_{x \in \mathcal{X}} P^x (n < T_x < \infty) \le C_0 (\log n)^{-1-\delta}, \ \ n \ge 1.  
\end{equation}

Thus, 
for every $\epsilon > 0$, every $n \ge N_1 (\epsilon) + N_2 (\epsilon)$ and every $m \in \mathbb{N}$, 
it follows that 
\begin{equation*}
P^x \left(R_{mn}^{j} \ge mn(F_j^{(u)} + \epsilon)\right) \le \exp\left(-C(\epsilon) m \right) + \frac{4C_0}{\epsilon} (\log n - \log \log n)^{-1-\delta}. 
\end{equation*}

Let $\eta > 0$.

For $m_k = r_k =  \lfloor (1+\eta)^{k/2} \rfloor$, 
\[ \exp\left(-C(\epsilon) m_k \right) + \frac{4C_0}{\epsilon} (\log r_k - \log \log r_k)^{-1-\delta} = O\left(k^{-1-\delta}\right), \ k \to \infty. \]
Let $a_k \coloneqq m_k r_k$. 
Then, 
 \[ \sum_{k=1}^{\infty} P^x \left(R_{a_k}^{j} \ge a_k (F_j^{(u)} + \epsilon)\right) < +\infty, \]
and by the Borel-Cantelli lemma, 
\begin{equation}\label{eq:lacunary-upper} 
\limsup_{k \to \infty} \frac{R_{a_k}^{j}}{a_k} \le F_j^{(u)} + \epsilon, \ \textup{ $P^x$-a.s.}  
\end{equation}
For $a_k \le n \le a_{k+1}$, 
$\displaystyle \frac{R^j_n}{n} \le \frac{a_{k+1}}{a_k} \frac{R_{a_{k+1}}^{j}}{a_{k+1}}$. 
By this estimate and the equality $\displaystyle \lim_{k \to \infty} \frac{a_{k+1}}{a_k} = 1+\eta$, 
\[ \limsup_{n \to \infty} \frac{R_{n}^{j}}{n} \le (1+\eta)(F_j^{(u)} + \epsilon), \ \textup{ $P^x$-a.s.}  \]
By letting  $\eta \to 0$ and $\epsilon \to 0$, 
we see that  \eqref{eq:upper-SLLN-1} holds $P^x$-a.s.

We now deal with $R^{(j)}_n$. 
In the same manner as in the derivation of \eqref{eq:lacunary-upper}, we obtain that   
\[ \limsup_{k \to \infty} \frac{R_{a_k}^{(j)}}{a_k} \le F_{(j)}^{(u)} + \epsilon, \ \textup{ $P^x$-a.s.}  \]
Since 
\[ R_n^{(j)} \le R_{a_{k+1}}^{(j)} + a_{k+1} - a_k,  \ a_k \le n \le a_{k+1},\]  
it holds that 
\[ \limsup_{n \to \infty} \frac{R_n^{(j)}}{n}  \le \limsup_{k \to \infty} \frac{a_{k+1}}{a_k} \frac{R_{a_{k+1}}^{(j)}}{a_{k+1}} + \frac{a_{k+1} - a_k}{a_k} \le (1+\eta)(F_{(j)}^{(u)} + \epsilon) + \eta, \ \textup{ $P^x$-a.s.}   \]
By letting  $\eta \to 0$ and $\epsilon \to 0$, 
we see that  \eqref{eq:upper-SLLN-2} holds $P^x$-a.s.

We now deal with the lower estimates. 

We see that 
\[ R_{mn}^{j} \ge \sum_{k=1}^{m}  (T^{j}_{k,n} - E_{m,n}), \ \textup{ and }  \  R_{mn}^{(j)} \ge \sum_{k=1}^{m} (T^{(j)}_{k,n} - E_{m,n}). \]
For ease of notation, let 
\[ F^{(l)}_{j} \coloneqq (F_{\inf})^{j-1} (1-F_{\sup}), \ \textup{ and }  \ F^{(l)}_{(j)} \coloneqq (F_{\inf})^{j-1} (1-F_{\sup})^2. \]
Then, for every $\epsilon > 0$, 
\[ P^x \left(R_{mn}^{j} \le mn(F_j^{(l)} - \epsilon)\right) \le P^x \left(  \sum_{k=1}^{m}   T^{j}_{k,n} \le mn \left(F_j^{(l)} - \frac{\epsilon}{2} \right) \right) + P^x \left(  E_{m,n} \ge n \frac{\epsilon}{2} \right). \]

We first give an upper bound for  $\displaystyle P^x \left(  \sum_{k=1}^{m}   T^{j}_{k,n} \le mn \left(F_j^{(l)} - \frac{\epsilon}{2} \right) \right)$. 
By Theorem \ref{thm:main-exp}, 
it holds that 
for every $\epsilon > 0$, 
there exists $N(\epsilon) \in \mathbb{N}$ such that 
for every $n \ge N(\epsilon)$, 
$\displaystyle \frac{E^x \left[R_n^j \right] }{n} \ge F_j^{(l)} - \frac{\epsilon}{4}$, which is equivalent with $\displaystyle \frac{E^x \left[n - R_n^j \right] }{n} \le 1 - F_j^{(l)} + \frac{\epsilon}{4}$. 

In the same manner as in the proof of Lemma \ref{lem:Laplace-estimate-multi-range}, 
we can show that 
for every $\epsilon > 0$, there exists  $N(\epsilon) \in \mathbb{N}$ such that 
for every $n \ge N(\epsilon)$, 
\[ \sup_{x \in \mathcal{X}} E^x \left[\exp\left(\frac{\theta(\epsilon/8)}{n} (n-R_n^j) \right) \right] \le \exp\left(\theta(\epsilon/8) \left(1 - F_j^{(l)} + \frac{3\epsilon}{8} \right) \right).\]

Since $T^{j}_{k,n} \le n$, 
by the Markov property, for every $\lambda > 0$ and $x \in \mathcal{X}$, 
\[  P^x \left(  \sum_{k=1}^{m}   T^{j}_{k,n} \le mn \left(F_j^{(l)} - \frac{\epsilon}{2} \right) \right) = P^x \left(  \sum_{k=1}^{m}   (n-T^{j}_{k,n}) \ge mn \left(1 - F_j^{(l)} + \frac{\epsilon}{2} \right) \right)\]
\[ \le \exp\left(-\lambda mn \left(1 - F_j^{(l)} + \frac{\epsilon}{2} \right) \right) E^x \left[ \prod_{k=1}^{m} \exp\left( \lambda (n-T^{j}_{k,n})  \right)  \right] \]
\[ \le \left( \exp\left(-\lambda n \left(1 - F_j^{(l)} + \frac{\epsilon}{2} \right) \right) \sup_{x \in \mathcal{X}} E^x \left[ \exp\left(\lambda (n-R^j_n) \right) \right] \right)^m. \]

In the same manner as in the proof of Lemma \ref{lem:upper-Markov-tail}, 
we can show that 
for every $\epsilon > 0$, there exist $C(\epsilon) > 0$  and $N_3 (\epsilon) \in \mathbb{N}$ such that 
for every $n \ge N_3 (\epsilon)$ and every $m \in \mathbb{N}$, 
\begin{equation*}
\sup_{x \in \mathcal{X}} P^x \left(  \sum_{k=1}^{m}   T^{j}_{k,n} \le mn \left(F_j^{(l)} - \frac{\epsilon}{2} \right) \right)  \le \exp\left(-C(\epsilon) m \right). 
\end{equation*}

Second, we give an upper bound for $\displaystyle P^x \left( E_{m,n} \ge n \frac{\epsilon}{2} \right)$. 
Let $n^{\prime}$ be as in \eqref{eq:def-n-prime}. 
As in \eqref{eq:Emn-upper}, by using $(U_1)$,  
we see that there exists $N_4 (\epsilon) \in \mathbb{N}$ such that for every $n \ge N_4 (\epsilon)$ and $m \in \mathbb{N}$ satisfying $mn^{\prime}/n < \epsilon/4$, 
\begin{equation*}\label{eq:Emn-upper-2} 
P^x \left( E_{m,n} \ge n \frac{\epsilon}{2} \right) \le \frac{2m}{\epsilon - 2m n^{\prime}/n}  \sup_{x \in \mathcal{X}} P^x (n^{\prime} < T_x < \infty). 
\end{equation*}

Recall \eqref{eq:c0}. 
Thus we obtain that 
for every $\epsilon > 0$, 
for every $n \ge N_3 (\epsilon) + N_4 (\epsilon)$ and every $m \in \mathbb{N}$ satisfying $mn^{\prime}/n < \epsilon/4$, 
\begin{equation*}
P^x \left(R_{mn}^{j} \le mn(F_j^{(l)} - \epsilon)\right) \le \exp\left(-C(\epsilon) m \right) + \frac{4C_0}{\epsilon} m (\log n - \log \log n)^{-1-\delta}. 
\end{equation*}

Let $\eta > 0$. 
For $m_k = \lfloor (\log k)^2 \rfloor$ and $r_k = \lfloor (1+\eta)^k \rfloor$, 
it holds that $m_k (r_k)^{\prime} / r_k < \epsilon/4$ for large $k$, and, 
\[ \exp\left(-C(\epsilon) m_k \right) + \frac{4C_0}{\epsilon} m_k (\log r_k - \log \log r_k)^{-1-\delta} = O\left(k^{-1- \frac{\delta}{2}} \right), \ k \to \infty. \] 

Let $b_k \coloneqq m_k r_k$. 
Then, 
\[ \sum_{k=1}^{\infty} P^x \left(R_{b_k}^{j} \le b_k (F_j^{(l)} - \epsilon)\right) < +\infty. \]
Hence, by the Borel-Cantelli lemma, 
\[ \liminf_{k \to \infty} \frac{R_{b_k}^{j}}{b_k} \ge F_j^{(l)} - \epsilon, \ \textup{ $P^x$-a.s.}  \]
It holds that for $b_k \le n \le b_{k+1}$, 
$\displaystyle \frac{R^j_n}{n} \ge \frac{b_{k}}{b_{k+1}} \frac{R_{b_{k}}^{j}}{b_{k}}$. 
By this estimate and the equality $\displaystyle \lim_{k \to \infty} \frac{b_{k}}{b_{k+1}} = \frac{1}{1+\eta}$, 
\begin{equation}\label{eq:lacunary-lower} 
\liminf_{n \to \infty} \frac{R_{n}^{j}}{n} \ge \frac{F_j^{(l)} - \epsilon}{1+\eta}, \ \textup{ $P^x$-a.s.}  
\end{equation}
Letting  $\eta \to 0$ and $\epsilon \to 0$, 
we see that  \eqref{eq:lower-SLLN-1} holds $P^x$-a.s.  

We now deal with $R^{(j)}_n$. 
In the same manner as in the derivation of \eqref{eq:lacunary-lower},  
\[ \liminf_{k \to \infty} \frac{R_{b_k}^{(j)}}{b_k} \ge F_{(j)}^{(l)} - \epsilon, \ \textup{ $P^x$-a.s.}  \]
Since 
\[ R_n^{(j)} \ge R_{b_{k}}^{(j)} - (b_{k+1} - b_k),  \ b_k \le n \le b_{k+1},\]  
it holds that 
\[ \liminf_{n \to \infty} \frac{R_n^{(j)}}{n}  \ge \liminf_{k \to \infty} \frac{b_{k}}{b_{k+1}} \frac{R_{b_{k}}^{(j)}}{b_{k}} - \frac{b_{k+1} - b_k}{b_k} \ge \frac{F_{(j)}^{(l)} - \epsilon}{1+\eta} - \eta, \ \textup{ $P^x$-a.s.}   \]
Letting  $\eta \to 0$ and $\epsilon \to 0$, 
we see that \eqref{eq:lower-SLLN-2} holds $P^x$-a.s.

\section{Functions of the local times}\label{sec:local-times}

Let $f$ be a non-negative function on $\mathbb{N} \cup \{0\}$ such that $f(0) = 0$. 
Let 
\[ G_n (f) \coloneqq \sum_{x \in \mathcal{X}} f(\ell(n,x)). \]
Let 
\[ C_{\inf}(f) \coloneqq \sum_{j \ge 1} f(j) (F_{\inf})^{j-1} (1-F_{\sup})^2 \]
and 
\[ C_{\sup}(f) \coloneqq \sum_{j \ge 1} f(j) (F_{\sup})^{j-1} (1-F_{\inf})^2. \]

Since $f(0) = 0$, we see that 
\[ G_n (f) = \sum_{j \ge 1} f(j) R_n^{(j)}. \]

The following theorem extends  \cite[Theorem 2]{CCMP2026} and the $L^2$-convergence part of \cite[Theorem 1]{AK2020}. 

\begin{Thm}\label{thm:func-loc-time-Lp}
Let $p \ge 1$. 
Assume that  $(U_1)$ holds and 
\[ \sum_{j \ge 1} \frac{f(j)^p}{j^{p-1}} (F_{\sup})^{j} < \infty.  \]
Then 
\begin{equation}\label{eq:L2-sup-upper} 
\lim_{n \to \infty} E^x \left[ \left(\left( \frac{G_n (f)}{n} - C_{\sup}(f) \right)_{+}\right)^p \right] = 0 
\end{equation} 
and 
\begin{equation}\label{eq:L2-inf-lower}   
\lim_{n \to \infty} E^x \left[ \left(\left( \frac{G_n (f)}{n} - C_{\inf}(f) \right)_{-}\right)^p \right] = 0. 
\end{equation} 
\end{Thm}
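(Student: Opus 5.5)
The strategy is truncation by local-time level. We combine the almost sure bounds of Theorem \ref{thm:main-SLLN} for the finitely many levels $j\le J$ with a uniform $L^p$ bound on the tail $\sum_{j>J} f(j)R_n^{(j)}/n$. Write
\[ \frac{G_n(f)}{n} = \sum_{j=1}^{J} f(j)\frac{R_n^{(j)}}{n} + \frac{H_{n,J}}{n}, \qquad H_{n,J} \coloneqq \sum_{j>J} f(j) R_n^{(j)}. \]

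\emph{Step 1 (finite part).} Since $0\le R_n^{(j)}/n\le 1$, Theorem \ref{thm:main-SLLN} gives, $P^x$-a.s.,
\[ \limsup_n \sum_{j\le J} f(j)R_n^{(j)}/n \le \sum_{j\le J} f(j)(F_{\sup})^{j-1}(1-F_{\inf})^2 \le C_{\sup}(f), \]
and likewise $\liminf_n \ge C_{\inf,J}(f) \coloneqq \sum_{j\le J} f(j)(F_{\inf})^{j-1}(1-F_{\sup})^2$. These finite sums are bounded by $\sum_{j\le J} f(j)$. By bounded convergence, $E^x[((\sum_{j\le J} f(j)R^{(j)}_n/n - C_{\sup}(f))_+)^p]\to 0$ and $E^x[((\sum_{j\le J}\cdots - C_{\inf,J}(f))_-)^p]\to0$.

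\emph{Step 2 (lower bound).} Since $H_{n,J}\ge 0$, we have $(G_n/n - C_{\inf}(f))_- \le (\sum_{j\le J}\cdots - C_{\inf,J})_- + (C_{\inf}(f)-C_{\inf,J}(f))$. The second term tends to $0$ as $J\to\infty$ because $C_{\inf}(f)<\infty$. Finiteness of $C_{\inf}(f)$ follows from the hypothesis: when $F_{\sup}<1$, Hölder gives $f(j)\le (f(j)^p/j^{p-1})^{1/p} j^{(p-1)/p}$ and a geometric factor survives. When $F_{\sup}=1$, $C_{\sup}=C_{\inf}$-type terms vanish or the hypothesis forces $f\equiv0$, and the statement is trivial. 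This case split needs a quick check. Minkowski's inequality in $L^p$ then gives \eqref{eq:L2-inf-lower}.

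\emph{Step 3 (upper bound, main obstacle).} By Minkowski, $\|(G_n/n - C_{\sup})_+\|_p \le \|(\sum_{j\le J}\cdots - C_{\sup})_+\|_p + \|H_{n,J}/n\|_p$, so it suffices to show $\sup_n \|H_{n,J}/n\|_{L^p(P^x)}\to0$ as $J\to\infty$. I would use the representation $H_{n,J}=\sum_{y} f(\ell(n,y))\mathbf 1_{\ell(n,y)>J}$ together with $\sum_{j>J} jR_n^{(j)} \le n$. By Jensen/Hölder with weights $jR_n^{(j)}/n$ (which sum to at most $1$),
\[ \Bigl(\frac{H_{n,J}}{n}\Bigr)^p = \Bigl(\sum_{j>J}\frac{f(j)}{j}\cdot\frac{jR_n^{(j)}}{n}\Bigr)^p \le \sum_{j>J}\frac{f(j)^p}{j^{p}}\frac{jR_n^{(j)}}{n} = \sum_{j>J}\frac{f(j)^p}{j^{p-1}}\frac{R_n^{(j)}}{n}. \]
Taking expectations, $E^x[R_n^{(j)}]\le E^x[R_n^j]\le (F_{\sup})^{j-1} E^x[R_n]\le (F_{\sup})^{j-1}n$ holds for \emph{every} $n$ by iterating \eqref{eq:exp-unif-upperbound}. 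This is the key point: the bound is nonasymptotic and uniform in $x$. Hence $E^x[(H_{n,J}/n)^p]\le F_{\sup}^{-1}\sum_{j>J} f(j)^p j^{1-p}(F_{\sup})^j$ uniformly in $n$, which tends to $0$ as $J\to\infty$ by the summability hypothesis. (If $F_{\sup}=0$, then $H_{n,J}=0$ trivially.)

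Finally, let $n\to\infty$ and then $J\to\infty$. The delicate points are making sure the Jensen step is applied with total weight at most $1$ (using convexity of $t\mapsto t^p$ with a possibly defective weight, together with $0$ at the remaining mass) and handling the degenerate case $F_{\sup}=1$ for finiteness of $C_{\inf}(f)$ in Step 2.
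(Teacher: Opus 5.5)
Your proposal is correct and follows essentially the same route as the paper. Both truncate at a finite level and apply Theorem \ref{thm:main-SLLN} with bounded convergence to the finite part. Both control the tail through $(H_{n,J}/n)^p \le \sum_{j>J} f(j)^p j^{1-p} R_n^{(j)}/n$ together with the nonasymptotic bound $E^x[R_n^{(j)}]\le n(F_{\sup})^{j-1}$; you use Minkowski where the paper uses $((a+b)_+)^p\le 2^{p-1}(a_+^p+b_+^p)$, which is immaterial.

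Your explicit lower-bound argument, which the paper covers only with ``the same argument'', is sound. One correction: when $F_{\sup}=1$ you simply have $C_{\inf}(f)=0$ because of the factor $(1-F_{\sup})^2$, so the claim that the hypothesis forces $f\equiv 0$ is neither true nor needed.
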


\begin{proof}
We remark that for every $a, b \in \mathbb{R}$, 
\[ ((a+b)_{+})^p \le (a_{+} + b_{+})^p \le 2^{p-1}((a_{+})^p + (b_{+})^p). \]

Let $f_N (j) \coloneqq f(j) {\bf 1}_{\{1,\dots, N\}}(j)$. 
Then, using $C_{\sup}(f) \ge C_{\sup}(f_N)$, 
\[ \left( \frac{G_n (f)}{n} - C_{\sup}(f) \right)_+ \le \left( \frac{G_n (f_N)}{n} - C_{\sup}(f) \right)_{+} + \frac{G_n (f) - G_n (f_N)}{n}.  \]

By \eqref{eq:exp-unif-upperbound}, 
\begin{equation}\label{eq:exp-upper-unif} 
\sup_x E^x \left[R_n^{(j)} \right] \le \sup_x E^x \left[R_n^{j} \right] \le (F_{\sup})^{j-1} \sup_x E^x \left[R_n \right] \le n (F_{\sup})^{j-1}. 
\end{equation}

Let $q$ be  the conjugate exponent of $p$, i.e., $\frac{1}{p} + \frac{1}{q} = 1$, with the convention $q=\infty$ if $p=1$. 
Using the H\"older inequality  and $n = \sum_{j \ge 1} j R_n^{(j)}$, 
\[ \sum_{j \ge N+1} f(j) R_n^{(j)} = \sum_{j \ge N+1} \frac{f(j)}{j} (j R_n^{(j)})^{1/p} \cdot (j R_n^{(j)})^{1/q} \le n^{1/q} \left(\sum_{j \ge N+1} \frac{f(j)^p}{j^{p-1}} R_n^{(j)}\right)^{1/p}. \]
Hence, 
\[ \left(  \frac{G_n (f)}{n} -  \frac{G_n (f_N)}{n} \right)^p  \le \sum_{j \ge N+1} \frac{f(j)^p}{j^{p-1}} \frac{R_n^{(j)}}{n}.  \]
Using this and \eqref{eq:exp-upper-unif}, 
\[ E^x \left[ \left(  \frac{G_n (f)}{n} -  \frac{G_n (f_N)}{n} \right)^p \right] \le \sum_{j \ge N+1} \frac{f(j)^p}{j^{p-1}}(F_{\sup})^{j-1}.  \]

Since $ \frac{G_n (f_N)}{n} \le \max_{1 \le j \le N} f(j)$, 
\eqref{eq:upper-SLLN-2} and the Lebesgue convergence theorem yield that 
\[ \lim_{n \to \infty} E^x \left[ \left( \left( \frac{G_n (f_N)}{n} -  C_{\sup}(f_N)  \right)_{+} \right)^p \right] = 0. \]
Using this and $  C_{\sup}(f_N) \le   C_{\sup}(f)$, we obtain that 
\[ \lim_{n \to \infty} E^x \left[ \left( \left( \frac{G_n (f_N)}{n} -  C_{\sup}(f)  \right)_{+} \right)^p \right] = 0. \]

Hence, 
\[  \limsup_{n \to \infty} E^x \left[ \left(\left( \frac{G_n (f)}{n} - C_{\sup}(f) \right)_{+}\right)^p \right] \le 2^{p-1} \sum_{j \ge N+1} \frac{f(j)^p}{j^{p-1}}(F_{\sup})^{j-1}. \]
Letting $N \to \infty$, we have \eqref{eq:L2-sup-upper}. 
The same argument gives  \eqref{eq:L2-inf-lower}.  
\end{proof}

\begin{Prop}
Assume that $(U_1)$ holds and $F_{\inf} < 1$. 
Then for every $x \in \mathcal{X}$, 
\begin{equation}\label{eq:inf-sup-sandwich} 
C_{\inf}(f) \le \liminf_{n \to \infty} \frac{G_n (f)}{n} \le C_{\sup}(f), \ \textup{ $P^x$-a.s.}  
\end{equation}
\end{Prop}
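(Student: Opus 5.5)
The argument splits into the two inequalities of \eqref{eq:inf-sup-sandwich}. The lower one uses \eqref{eq:lower-SLLN-2} together with truncation; no summability assumption on $f$ is needed, because $f \ge 0$. The upper one only has to control the $\liminf$, and for this we compare with the single point range.

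\emph{Lower bound.} Put $f_N(j) \coloneqq f(j){\bf 1}_{\{1,\dots,N\}}(j)$ as in the proof of Theorem \ref{thm:func-loc-time-Lp}. Since $f\ge 0$, we have $G_n(f) \ge G_n(f_N) = \sum_{j=1}^N f(j) R_n^{(j)}$. By \eqref{eq:lower-SLLN-2}, applied to each of the finitely many $j\le N$ and intersecting countably many full-measure events, $P^x$-a.s.
\[ \liminf_{n\to\infty} \frac{G_n(f)}{n} \ge \sum_{j=1}^N f(j)(F_{\inf})^{j-1}(1-F_{\sup})^2 . \]
Letting $N\to\infty$ along the integers gives $\liminf_n G_n(f)/n \ge C_{\inf}(f)$ a.s., and this holds even if $C_{\inf}(f)=\infty$.

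\emph{Upper bound.} If $C_{\sup}(f)=\infty$ there is nothing to prove, so assume $C_{\sup}(f)<\infty$. The hypothesis $F_{\inf}<1$ should make $C_{\sup}(f)>0$ unless $f\equiv 0$ on the relevant support. I would then aim for the $\limsup$ bound, which is stronger than the $\liminf$ statement, by truncation: $G_n(f) \le G_n(f_N) + \sum_{j>N} f(j)R_n^{(j)}$. The first term is handled by \eqref{eq:upper-SLLN-2}, giving $\limsup_n G_n(f_N)/n \le C_{\sup}(f_N)\le C_{\sup}(f)$ a.s. The tail is the obstacle: without a growth condition on $f$ there is no a.s.\ control of $\sum_{j>N} f(j)R_n^{(j)}/n$.

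Since only the $\liminf$ is claimed, I would fall back on the following route. Using $(U_1)$, Theorem \ref{thm:func-loc-time-Lp} is not available without summability, so instead apply Fatou's lemma to the nonnegative quantity $G_n(f)/n$. We have $E^x[\liminf_n G_n(f)/n] \le \liminf_n E^x[G_n(f)]/n$. Next, $E^x[G_n(f)]/n = \sum_j f(j)E^x[R_n^{(j)}]/n$, and we would like to show this has $\limsup \le C_{\sup}(f)$. Termwise, Theorem \ref{thm:main-exp} gives the bound for each $j$. For the tail, $E^x[R_n^{(j)}] \le \sup_y E^y[R_n^{(j)}]$, which is bounded by \eqref{eq:comparision-ineq-2-1}: it is at most $(1-F_{\inf}+\epsilon)(F_{\sup})^{j-1}n + N_\epsilon$. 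The additive $N_\epsilon$ is harmless only for $j\le n$ (note $R_n^{(j)}=0$ for $j>n$), so it is convenient to use the exact bound $E^x[R_n^{(j)}]\le \sum_{k\le n}P^x(\mathcal B_k^{(j)})(1-F_{\inf}(n-k))$. Combining this with \eqref{eq:upper-exp-2} and dominated convergence in $j$ (the dominating sequence is $f(j)(F_{\sup})^{j-1}$, summable because $C_{\sup}(f)<\infty$ and $1-F_{\inf}>0$, which is exactly where $F_{\inf}<1$ enters) yields $\limsup_n E^x[G_n(f)]/n\le C_{\sup}(f)$.

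That gives only $E^x[\liminf_n G_n(f)/n]\le C_{\sup}(f)$, an expectation bound, which is the main gap. To upgrade it to an almost sure statement, I would argue that $\liminf_n G_n(f)/n$ is a.s.\ bounded below by $C_{\inf}(f)$, and in the homogeneous case this is enough. In general, however, I expect to need a genuinely a.s.\ argument. The plan there is to run the lacunary Borel--Cantelli scheme of the proof of Theorem \ref{thm:main-SLLN} directly for $G_n(f)$ along $a_k$, with the block decomposition $G_{mn}(f)\le\sum_k G^{(k)}_n(f)+(\text{overlap})$. The overlap term, where sites are shared between blocks, is the delicate part, since $f$ need not be subadditive; I would first try to reduce it to $E_{m,n}$ by assuming $f$ nondecreasing, or by handling only the finite truncation $f_N$ and then letting $N\to\infty$.
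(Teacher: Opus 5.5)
\textbf{There is a genuine gap in the upper bound.} Your lower bound is correct and is the paper's argument: truncate to $f_N$, apply \eqref{eq:lower-SLLN-2} for $j\le N$, then let $N\to\infty$.

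Applying Fatou to $G_n(f)/n$ only gives $E^x[\liminf_n G_n(f)/n]\le C_{\sup}(f)$. A bound on the mean of $\liminf_n G_n(f)/n$ says nothing about its almost sure values. You could combine it with the a.s.\ lower bound only in the case $C_{\inf}(f)=C_{\sup}(f)$, as you note yourself. The lacunary Borel--Cantelli scheme you sketch afterwards is not carried out, and its overlap problem is left open. So the upper inequality is not established.

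\textbf{The missing step.} You state that Theorem \ref{thm:func-loc-time-Lp} is unavailable without summability. But in the only nontrivial case $C_{\sup}(f)<\infty$, you observe that $\sum_j f(j)(F_{\sup})^{j-1}<\infty$ because $1-F_{\inf}>0$. This is exactly the hypothesis of Theorem \ref{thm:func-loc-time-Lp} with $p=1$, and this is where $F_{\inf}<1$ is used.

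That theorem gives $E^x[(G_n(f)/n-C_{\sup}(f))_+]\to 0$. Hence some subsequence satisfies $(G_{n_k}(f)/n_k-C_{\sup}(f))_+\to 0$ $P^x$-a.s. It follows that $\liminf_n G_n(f)/n\le\limsup_k G_{n_k}(f)/n_k\le C_{\sup}(f)$ a.s., which is the paper's proof. Equivalently, apply Fatou to the positive part $(G_n(f)/n-C_{\sup}(f))_+$ instead of to $G_n(f)/n$. This works because $\liminf_n (G_n(f)/n-C_{\sup}(f))_+=(\liminf_n G_n(f)/n-C_{\sup}(f))_+$.

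You already had both ingredients of this $L^1$ statement:
\begin{itemize}
\item the a.s.\ truncated bound $\limsup_n G_n(f_N)/n\le C_{\sup}(f_N)$ from \eqref{eq:upper-SLLN-2};
\item the tail bound $E^x[\sum_{j>N}f(j)R_n^{(j)}]/n\le\sum_{j>N}f(j)(F_{\sup})^{j-1}$ from \eqref{eq:exp-upper-unif}.
\end{itemize}
Combine them through $(G_n(f)/n-C_{\sup}(f))_+\le (G_n(f_N)/n-C_{\sup}(f_N))_+ + (G_n(f)-G_n(f_N))/n$. Use bounded convergence on the first term, noting $G_n(f_N)/n\le\max_{j\le N}f(j)$. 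Then let $N\to\infty$. This turns the truncated a.s.\ information into the subsequential a.s.\ bound you need, and it is precisely the proof of Theorem \ref{thm:func-loc-time-Lp} for $p=1$.
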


\begin{proof}
The lower bound follows from the non-negativity of $f$ and Theorem \ref{thm:main-SLLN}. 
If $C_{\sup}(f) = \infty$, the upper bound is obvious. 
Assume that $C_{\sup}(f) < \infty$. 
By Theorem \ref{thm:func-loc-time-Lp} for $p=1$, 
there exists a sequence $(n_k)_k$ such that 
\[ \lim_{k \to \infty}  \left( \frac{G_{n_k} (f)}{n_k} - C_{\sup}(f) \right)_{+} = 0, \ \textup{ $P^x$-a.s.}  \]
This is equivalent to  
\[ \limsup_{k \to \infty}  \frac{G_{n_k} (f)}{n_k} \le C_{\sup}(f), \ \textup{ $P^x$-a.s.} \]
This implies the upper bound. 
\end{proof}

If $f(j) = j$ and $F_{\inf} = 1$, then $G_n (f) = n$ and $C_{\sup}(f) = 0$ so that \eqref{eq:inf-sup-sandwich} fails.

The following lower bound on the expectation plays a role similar to that of  \cite[Lemma 3]{CCMP2026}. 

\begin{Lem}\label{lem:exp-lower-quantitative}
Let $x \in \mathcal{X}, n \ge 1$. 
Then, \\
(i) $E^x \left[ R_n^{(1)} \right] \ge (1-F_{\sup})^2 n$.\\
(ii) If $j \ge 2$, then for every  $\ell \in \{1,\dots, n\}$, 
\[ E^x \left[ R_n^{(j)} \right] \ge (1-F_{\sup})^2 \ell F_{\inf}\left( \left\lfloor \frac{n-\ell}{j-1} \right\rfloor\right)^{j-1}. \]
\end{Lem}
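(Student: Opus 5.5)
For (i) we combine two inequalities from the proof of Theorem \ref{thm:main-exp}, both valid for every fixed $n$ and not only asymptotically. By \eqref{eq:comparision-ineq-1} with $j=1$,
\[ E^x\left[R_n^{(1)}\right] \ge (1-F_{\sup})\, E^x\left[R_n^1\right] = (1-F_{\sup})\, E^x[R_n]. \]
This bound used only the lower half of \eqref{eq:basic-Markov-ineq} and $F_{\sup}(m)\le F_{\sup}$. By the last-exit bound \eqref{eq:exp-lower-most-basic}, $E^x[R_n]\ge n(1-F_{\sup})$. Combining the two gives (i) for all $n\ge 1$.

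For (ii) we first apply \eqref{eq:comparision-ineq-1} again, which gives $E^x[R_n^{(j)}]\ge (1-F_{\sup})E^x[R_n^j]$. It then remains to show
\[ E^x\left[R_n^j\right] \ge (1-F_{\sup})\,\ell\, F_{\inf}(m)^{j-1}, \qquad m \coloneqq \left\lfloor \frac{n-\ell}{j-1}\right\rfloor. \]
The idea is to count only the sites that are first visited at some time $k\le \ell$ and then revisited $j-1$ further times, each return taking at most $m$ steps. Any such site has $j$ visits by time $k+(j-1)m\le \ell+(n-\ell)=n$, so it is counted in $R_n^j$. Distinct first-visit times $k$ correspond to distinct sites. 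Therefore
\[ R_n^j \ge \sum_{k=1}^{\ell} {\bf 1}_{\mathcal{B}_k^{(1)}}\,{\bf 1}_{\{\text{the walk after time } k \text{ returns to } X_k \ j-1 \text{ times, each successive return within } m \text{ steps}\}}. \]

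We take expectations by conditioning on $\mathcal{F}_k$. Since $\mathcal{B}_k^{(1)}\in\mathcal{F}_k$, we apply the strong Markov property at $k$ and then at each successive return time to $X_k$. Each return within $m$ steps then has conditional probability $P^{X_k}(T_{X_k}\le m)\ge F_{\inf}(m)$, so the $k$-th term has expectation at least $P^x(\mathcal{B}_k^{(1)})F_{\inf}(m)^{j-1}$. Summing over $k\le \ell$ gives
\[ E^x\left[R_n^j\right] \ge F_{\inf}(m)^{j-1}\sum_{k=1}^{\ell}P^x\left(\mathcal{B}_k^{(1)}\right) = F_{\inf}(m)^{j-1}E^x[R_\ell] \ge F_{\inf}(m)^{j-1}\,\ell\,(1-F_{\sup}), \]
where the last step is \eqref{eq:exp-lower-most-basic} applied with $\ell$ in place of $n$. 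If $m=0$, then $F_{\inf}(0)=0$ and the bound is trivial.

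The only step needing care is the iterated strong Markov argument. One must check that the return times to the random site $X_k$ are stopping times, so the $j-1$ returns factorize with each factor bounded below uniformly by $F_{\inf}(m)$. One must also check the time bookkeeping, namely that $k+(j-1)m\le n$ for every $k\le\ell$. Both are straightforward once the events are written in terms of successive hitting times $T^{(i)}_{X_k}$ after time $k$.
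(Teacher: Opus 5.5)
Your proof is correct and follows the paper's argument. The paper also bounds $E^x[R_n^j]\ge E^x[R_\ell]\,F_{\inf}\bigl(\lfloor (n-\ell)/(j-1)\rfloor\bigr)^{j-1}$ by splitting at a first-visit time $k\le\ell$ (indexed by site as $\sum_y P^x(T_y=k)\,P^y(T_y^{(j-1)}\le n-k)$, which equals your sum over $\mathcal{B}_k^{(1)}$) and bounding the $j-1$ successive returns by $P^y(T_y\le m)^{j-1}$; it then finishes with \eqref{eq:comparision-ineq-1} and \eqref{eq:exp-lower-most-basic} exactly as you do.
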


\begin{proof}
For $y \in \mathcal{X}$, 
let $ T_y^{(1)} \coloneqq T_y$ and 
\[ T_y^{(j)} \coloneqq \inf \left\{n > T_y^{(j-1)} \colon X_n = y \right\}, \ j \ge 2. \]

By the Markov property, we obtain that 
\begin{equation}\label{eq:multi-ineq-Markov}
P^y \left( T_y^{(j)} \le n \right) \ge P^y \left( T_y \le \left\lfloor \frac{n}{j} \right\rfloor \right)^{j}, \ j \ge 1, n \ge 1, y \in \mathcal{X}. 
\end{equation}
We also see that 
\begin{equation*}
E^x \left[ R_n^j \right] = \sum_{y \in \mathcal{X}}  P^x \left( T_y^{(j)} \le n \right). 
\end{equation*}
By this and the Markov property, 
\[ E^x \left[ R_n^j \right] = \sum_{y \in \mathcal{X}}  \sum_{k = 1}^{n}  P^x \left( T_y^{(j)} \le n, T_y = k \right) = \sum_{y \in \mathcal{X}}  \sum_{k = 1}^{n}  P^x \left( T_y = k \right) P^y \left( T_y^{(j-1)} \le n - k \right). \]
By this and \eqref{eq:multi-ineq-Markov}, 
\[  E^x \left[ R_n^j \right] \ge \sum_{y \in \mathcal{X}} P^x \left( T_y \le \ell \right) P^y \left( T_y^{(j-1)} \le n - \ell \right) \ge E^x \left[R_{\ell}\right] F_{\inf}\left( \left\lfloor \frac{n-\ell}{j-1} \right\rfloor\right)^{j-1}. \]
Applying \eqref{eq:comparision-ineq-1} and  \eqref{eq:exp-lower-most-basic}, we obtain the assertion. 
\end{proof}

We now consider the following stronger uniform tail  condition $(U_2)$: 
\[ \sup_{x} P^x (n < T_x < \infty) = O(n^{-\delta}), \ n \to \infty, \delta > 0. \]
This condition was considered in \cite[Proposition 5.7]{KN2018}. 
The simple random walk on $\mathbb{Z}^d, d \ge 3$, satisfies this condition. 

The following corresponds to \cite[Remark 3]{CCMP2026}. 
\begin{Prop}\label{prop:Lp-explode}
Assume that $(U_2)$ holds, that $0 < F_{\inf} \le F_{\sup} < 1$, and that $p \ge 1$. 
Let  $\alpha > p-2 + \frac{p-1}{\delta}$. 
Let $f(0) \coloneqq 0$ and $f(j) \coloneqq j^{\alpha/p} F_{\inf}^{- j/p}$ for $j \ge 1$. 
Then 
\[ \lim_{n \to \infty} E^x \left[ \left( \frac{G_n (f)}{n} \right)^p \right] = \infty. \]
\end{Prop}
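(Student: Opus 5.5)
Since $G_n(f) \ge f(j) R_n^{(j)}$ for every $j$, Jensen gives
\[ E^x\left[ \left( \frac{G_n (f)}{n} \right)^p \right] \ge \left( \frac{E^x[G_n(f)]}{n} \right)^p \ge \left( \frac{f(j) E^x[R_n^{(j)}]}{n}\right)^p \]
for every $j$. It is therefore enough to show that $\sup_j f(j)E^x[R_n^{(j)}]/n \to \infty$. In fact we could sum over a window of $j$'s, since all terms are nonnegative. The plan is to choose $j = j(n)$ growing like a small multiple of $\log n$ and to bound $E^x[R_n^{(j)}]$ from below using Lemma \ref{lem:exp-lower-quantitative}(ii).

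By Lemma \ref{lem:exp-lower-quantitative}(ii) with $\ell = \lfloor n/2 \rfloor$,
\[ \frac{E^x[R_n^{(j)}]}{n} \ge c\,(1-F_{\sup})^2 \, F_{\inf}(m)^{j-1}, \qquad m = \left\lfloor \frac{n-\ell}{j-1}\right\rfloor \ge \frac{n}{3j} . \]
By $(U_2)$, for some $C$ and all $m$ we have
\[ F_{\inf}(m) \ge F_{\inf} - \sup_y P^y(m<T_y<\infty) \ge F_{\inf} - C m^{-\delta} = F_{\inf}\left(1 - C' m^{-\delta}\right), \]
using $F_{\inf}>0$. Hence
\[ F_{\inf}(m)^{j-1} \ge F_{\inf}^{j-1} \exp\left(-2C' (j-1) m^{-\delta}\right) \]
once $m$ is large. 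With $m \asymp n/j$, the error exponent is $O(j^{1+\delta} n^{-\delta})$, which stays bounded as long as $j \le n^{\delta/(1+\delta)}$. Combining these,
\[ \frac{f(j)^p E^x[R_n^{(j)}]^p}{n^p} \ge c' \, j^{\alpha} F_{\inf}^{-j} F_{\inf}^{p(j-1)} , \]
up to the bounded exponential factor. This is not yet enough, because raising the expectation to the $p$-th power produces $F_{\inf}^{p(j-1)}$, whereas $f(j)^p$ compensates only a single power $F_{\inf}^{-j}$. So for $p>1$ the naive Jensen route only gives growth when $p=1$. For $p=1$ the bound is $\gtrsim j^{\alpha}$ with $j=\lfloor n^{\delta/(1+\delta)}\rfloor \to\infty$, and since $\alpha > -1$ the claim follows (after summing over $j$ in the window if needed).

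For general $p$, I would instead use $G_n(f)^p \ge f(j)^p \mathbf 1\{R_n^{(j)}\ge 1\}$, since a single site with local time $j$ already contributes $f(j)$. This gives
\[ E^x\left[\left(\frac{G_n(f)}{n}\right)^p\right] \ge \frac{f(j)^p}{n^p} P^x(R_n^{(j)} \ge 1) \ge \frac{f(j)^p}{n^p}\, P^x(\exists y: \ell(n,y)=j) . \]
To bound the probability from below, take $y = X_1$ (or $x$ itself). By the strong Markov property, the event that $y$ is revisited exactly $j-1$ more times, each within time $n/j$, and then never again has probability at least
\[ c\, F_{\inf}(\lfloor n/j \rfloor)^{j-1}(1-F_{\sup}) \ge c'' F_{\inf}^{j} \]
by the $(U_2)$ estimate above. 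This yields the lower bound $j^{\alpha} F_{\inf}^{-j} F_{\inf}^{j} n^{-p} = j^{\alpha} n^{-p}$, which loses the factor $n^{p}$. The remedy is to combine both ideas: count the roughly $n F_{\inf}^{j}$ expected sites of level $j$, and use that on the event they are numerous, $G_n \ge f(j) R_n^{(j)}$.

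The cleanest route is to restrict to the sub-block of visits and apply Hölder in the opposite direction:
\[ E^x[(G_n/n)^p] \ge E^x[(f(j)R_n^{(j)}/n)^p] \ge f(j)^p n^{-p} E^x[R_n^{(j)}] \quad\text{whenever } R_n^{(j)}\in\{0\}\cup[1,\infty), \]
since $r^p \ge r$ for integers $r\ge0$. This gives
\[ E^x[(G_n/n)^p] \ge n^{1-p} \, j^{\alpha} F_{\inf}^{-j}\, c\,F_{\inf}^{j-1} \asymp j^{\alpha} n^{1-p} . \]
The largest admissible choice is $j \asymp n^{\delta/(1+\delta)}$, giving the growth rate $n^{\alpha\delta/(1+\delta) + 1 - p}$. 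This diverges exactly when $\alpha > (p-1)(1+\delta)/\delta = p-1+\frac{p-1}{\delta}$. That is close to, but slightly stronger than, the stated threshold $p-2+\frac{p-1}{\delta}$. The missing $+1$ should come from summing over the window $j \in [J/2, J]$ rather than taking a single $j$: since
\[ G_n^p \ge \sum_j f(j)^p R_n^{(j)} , \]
summing $j^{\alpha}$ over that window gains a factor $J$. With $J \asymp n^{\delta/(1+\delta)}$ the total is $n^{(\alpha+1)\delta/(1+\delta)+1-p}\to\infty$, which holds precisely when $\alpha > p-2+\frac{p-1}{\delta}$. The main obstacle is controlling the error factor $F_{\inf}(m)^{j-1}/F_{\inf}^{j-1}$ uniformly over the window via $(U_2)$, which is the computation sketched above.
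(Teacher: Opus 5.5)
Your final argument is correct and is essentially the paper's proof. The paper likewise uses $G_n(f)^p \ge \sum_j f(j)^p (R_n^{(j)})^p \ge \sum_j f(j)^p R_n^{(j)}$, applies Lemma \ref{lem:exp-lower-quantitative}(ii) with $\ell=\lfloor n/2\rfloor$, uses $(U_2)$ to keep $F_{\inf}(\cdot)^{j-1}/F_{\inf}^{j-1}$ bounded below for $j \lesssim n^{\delta/(1+\delta)}$, and sums $j^{\alpha}$ over that range to get growth $n^{(\alpha+1)\delta/(1+\delta)+1-p}$. Your window $[J/2,J]$ and exponential bound, versus the paper's range $[2,\,a n^{\delta/(1+\delta)}+1]$ and Bernoulli bound, make no real difference, and the Jensen and single-site detours can simply be dropped.
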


\begin{proof}
We first remark that 
\[ E^x \left[ \left( \frac{G_n (f)}{n} \right)^p \right] \ge \frac{1}{n^p} \sum_{j \ge 1} f(j)^p E^x \left[\left(R_n^{(j)}\right)^p\right] \ge \frac{1}{n^p} \sum_{j \ge 2} f(j)^p E^x \left[R_n^{(j)}\right]. \]
By Lemma \ref{lem:exp-lower-quantitative}, 
\[ \frac{1}{n^p} \sum_{j \ge 2} f(j)^p E^x \left[R_n^{(j)}\right] \ge \frac{(1-F_{\sup})^2}{n^p}   \left\lfloor \frac{n}{2} \right\rfloor \sum_{j \ge 2}  j^{\alpha} F_{\inf}^{-j} F_{\inf}\left( \left\lfloor \frac{n}{2(j-1)} \right\rfloor\right)^{j-1}. \]
By  $(U_2)$, 
there exists $C > 0$ such that for every $n \ge 1$ and every $j \ge 2$, 
\[ F_{\inf}\left( \left\lfloor \frac{n}{2(j-1)} \right\rfloor\right) \ge  \left(1 - C \left(\frac{j-1}{n}\right)^{\delta}\right)_{+}  F_{\inf}. \]
Hence it suffices to show that 
\begin{equation}\label{eq:lim-infty-sum-j}
\lim_{n \to \infty} \frac{1}{n^{p-1}} \sum_{j \ge 2} j^{\alpha} \left(\left(1 - C \left(\frac{j-1}{n}\right)^{\delta}\right)_{+}\right)^{j-1} = \infty. 
\end{equation}

Let $a > 0$ such that $C a^{\delta + 1} < 1/2$. 
If $j - 1 \le \lfloor a n^{\delta/(1+\delta)} \rfloor$, then 
\[ \left(1 - C \left(\frac{j-1}{n}\right)^{\delta}\right)^{j-1}  \ge 1 - C \frac{(j-1)^{1+\delta}}{n^{\delta}} \ge \frac{1}{2}. \]
Hence
\[ \sum_{j \ge 2} j^{\alpha} \left(\left(1 - C \left(\frac{j-1}{n}\right)^{\delta}\right)_{+}\right)^{j-1} \ge \frac{1}{2} \sum_{j=2}^{\lfloor a n^{\delta/(1+\delta)} \rfloor + 1} j^{\alpha}  \]
and the right-hand side grows on the scale of $n^{(\alpha + 1) \delta / (\delta + 1)}$. 
By the assumption, $(\alpha + 1) \delta >  (p-1) (\delta + 1)$ and we obtain \eqref{eq:lim-infty-sum-j}. 
\end{proof}

\begin{Rem}
Suppose that the assumptions of Proposition \ref{prop:Lp-explode} hold. 
If, additionally, $p > 1$ and $F_{\sup} < F_{\inf}^{1/p}$, 
then $C_{\sup}(f) < \infty$ and 
\[ \lim_{n \to \infty} E^x \left[ \left( \frac{G_n (f)}{n} - C_{\sup}(f) \right)_{+} \right] =  \lim_{n \to \infty} E^x \left[ \left( \frac{G_n (f)}{n} - C_{\inf}(f) \right)_{-} \right] = 0. \] 
If $F_{\inf} = F_{\sup} < 1$ and $p >1$,  then $F_{\sup} < F_{\inf}^{1/p}$. 
\end{Rem}

In general, the strict inequality $F_{\inf} < F_{\sup}$ can occur. 
See \cite[Theorem 1.3]{Okamura2014}. 
By the same argument as in the proof of  \cite[Theorem 1.3]{Okamura2014}, 
one can show that 
\[ (F_{\inf})^{j-1} (1-F_{\sup}) =  \liminf_{n \to \infty} \frac{\inf_{x \in \mathcal{X}} E^{x} \left[R_n^j \right]}{n} \]
\[ < \limsup_{n \to \infty} \frac{\sup_{x \in \mathcal{X}} E^{x} \left[R_n^j \right]}{n} = (F_{\sup})^{j-1} (1-F_{\inf}) \]
for each $j$.

\section{Examples}\label{sec:examples}

We give a sufficient condition for $(U_1)$. 
Let $p_n (x,y) \coloneqq P^x (X_n = y)$. 
Then we have the trivial estimate 
$P^x (n < T_x < \infty) \le \sum_{k=n+1}^{\infty} p_k (x,x)$. 

Specifically, if 
there exists a constant  $C_0$ such that
\begin{equation}\label{eq:Nash-type}
p_n (x,x) \le \frac{C_0}{n (\log (n+1))^{2+\delta}} 
\end{equation} 
holds for every $x \in \mathcal{X}, n \ge 1$, 
then $(U_1)$ holds. 

\begin{Prop}
Assume that $(X_n)_n$ is the simple random walk on an infinite connected simple graph with bounded degrees. 
Then the Nash-type inequality \eqref{eq:Nash-type}  is stable under rough isometries between graphs. 
\end{Prop}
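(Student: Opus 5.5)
The plan is to reduce the stability of \eqref{eq:Nash-type} to a known stability result for a functional inequality, rather than transfer heat kernels directly. For simple random walks on bounded-degree graphs, on-diagonal upper bounds of the form $\sup_x p_n(x,x) \le 1/\psi(n)$ are equivalent, up to constants, to Nash/Faber--Krahn type inequalities. These inequalities are formulated through the Dirichlet form $\mathcal{E}(f,f) = \frac12\sum_{x\sim y}(f(x)-f(y))^2$ and the counting (or degree) measure, and are known to be stable under rough isometries (Coulhon, Grigor'yan, Saloff-Coste; see also Woess, Barlow). So I would proceed in three steps.

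\textbf{Step 1: from \eqref{eq:Nash-type} to a Nash inequality.} Put $\psi(t) \coloneqq t(\log(t+1))^{2+\delta}$. This function is increasing and of regular (doubling-type) growth: $\psi(2t)\le C\psi(t)$, and $\psi(t)/t$ is increasing. By the Coulhon--Grigor'yan equivalence theorem, the bound $\sup_x p_{2n}(x,x)\le C/\psi(n)$ is equivalent to a Nash inequality
\[ \|f\|_2^2 \le \Theta\bigl(\|f\|_1^2\bigr)^{-1}\ldots \]
More conveniently, it is equivalent to a Faber--Krahn inequality $\lambda_1(\Omega)\ge c/\phi(|\Omega|)$ for all finite sets $\Omega$. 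Here $\phi$ is determined by $\psi$ through $\int^{\phi(v)} ds/s$-type relations, and in this case $\phi(v)$ behaves like $v^{?}$ with logarithmic corrections. Since the graph has bounded degree, $p_n(x,x)$ and $p_{2n}(x,x)$ are comparable up to the usual laziness/parity issue. I would handle this by working with the lazy walk, or with $p_{2n}$ only, and then use $p_{2n+1}(x,x)\le \sqrt{p_{2n}(x,x)\,p_{2n+2}(x,x)}$ together with degree bounds.

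\textbf{Step 2: stability of the Faber--Krahn/Nash inequality.} Let $\Phi:G\to G'$ be a rough isometry between bounded-degree graphs. The standard construction works as follows. For $f'$ on $G'$, define $f(x)\coloneqq f'(\Phi(x))$ (or an average over a ball). For $f$ on $G$, define $f'(y)$ as the average of $f$ over $\Phi^{-1}(B(y,R))$. Bounded degree and the rough isometry constants then give the comparisons
\[ \mathcal{E}(f',f') \le C\mathcal{E}(f,f) + \ldots,\qquad \|f\|_p \asymp \|f'\|_p \ (p=1,2). \]
This is done up to an error term controlled by the energy, via the discrete Poincaré inequality on balls of fixed radius. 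These comparisons transfer the Nash inequality, with $\Theta$ changed only by multiplicative constants in its argument and value. The regularity of $\psi$ absorbs those constants. This is exactly the classical proof that Nash inequalities are rough-isometry invariant, and it needs no special form of $\psi$ beyond doubling-type regularity.

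\textbf{Step 3: back to heat kernel bounds.} Apply the converse direction of the equivalence on $G'$ to recover $p'_n(y,y)\le C'/\psi(n)$, which is \eqref{eq:Nash-type} with a new constant.

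The main obstacle is Step 1/3. The classical equivalence is usually stated for the uniform bound $\sup_{x,y}p_n(x,y)\le 1/\psi(n)$, with $\psi$ satisfying a regularity condition ensuring $\int^\infty dt/\psi$-type integrals behave well. Since $\sup_{x,y}p_{2n}(x,y)\le\sup_x p_{2n}(x,x)$ by Cauchy--Schwarz and reversibility, the on-diagonal form is fine. I would still verify carefully that $\psi(t)=t(\log(t+1))^{2+\delta}$ meets the regularity hypothesis (of the form $\psi'(t)/\psi(t)$ comparable to $1/t$), so that the constants lost in the Nash--heat kernel round trip change $\psi(n)$ only to $c\psi(cn)$, which is comparable to $\psi(n)$.
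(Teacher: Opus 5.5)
Your proposal is correct and is essentially the paper's argument: the paper also goes heat kernel bound $\to$ functional inequality $\to$ invariance under rough isometries $\to$ heat kernel bound. It cites Tessera's large-scale Sobolev inequality (his Theorems 3.5 and 8.1), which for exponent $2$ is precisely your Faber--Krahn inequality with $\phi=\varphi^2$. The paper also makes explicit the profile you left as $v^{?}$: solving $t=\int_0^{1/\gamma(t)}\varphi(v)^2\,\frac{dv}{v}$ with $\gamma(t)=C_0t^{-1}(\log(t+1))^{-2-\delta}$ gives $\varphi(v)^2\sim C_0v/(\log(v+1))^{2+\delta}$, i.e.\ $\phi(v)\asymp v/(\log(v+1))^{2+\delta}$.
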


\begin{proof}
We apply results of Tessera \cite{Tessera2008}. 
Let $\gamma(t) \coloneqq C_0 t^{-1} (\log (t+1))^{-2-\delta}$. 
Then there exists an increasing positive function $\varphi$ such that 
$t = \int_0^{1/\gamma(t)} \varphi(v)^2 \frac{dv}{v}$ for every $t \ge 1$. 
Moreover, 
$\varphi(v) \sim \frac{\sqrt{C_0 v}}{\log(v+1)^{1+\delta/2}}$ as $v \to \infty$. 

Let $\mathcal{X}_1$ and $\mathcal{X}_2$ be two infinite connected simple graphs with bounded degrees. 
Assume that \eqref{eq:Nash-type} holds for the simple random walk on $\mathcal{X}_1$. 
By \cite[Theorem 3.5 (ii)]{Tessera2008}, 
the Sobolev inequality associated with $\varphi$ holds for $\mathcal{X}_1$. 
By \cite[Theorem 8.1]{Tessera2008}, 
the Sobolev inequality is stable under rough isometries, and hence, 
the Sobolev inequality associated with $\varphi$ also holds for $\mathcal{X}_2$. 
By \cite[Theorem 3.5 (i)]{Tessera2008}, 
 \eqref{eq:Nash-type} also holds for the simple random walk on $\mathcal{X}_2$. 
\end{proof}

The Laakso-type graph constructed in Murugan \cite[Theorem 5.13]{Murugan2025} is an infinite connected simple graph with bounded degrees such that 
there exist two constants  $C_1$ and $C_2$  such that the inequalities 
\begin{equation}\label{eq:Nash-type-both}
\frac{C_1}{n (\log (n+1))^{2+\delta}}  \le p_n (x,x) + p_{n+1}(x,x) \le \frac{C_2}{n (\log (n+1))^{2+\delta}} 
\end{equation} 
hold for every $x \in \mathcal{X}, n \ge 1$. 

Every infinite, connected, locally finite, vertex-transitive graph with polynomial volume growth does not satisfy \eqref{eq:Nash-type-both}, 
because every such graph is roughly isometric to a Cayley graph of a virtually nilpotent group by Trofimov \cite{Trifomov1985} and the claim follows from Hebisch and Saloff-Coste \cite{HSC1993}. 

There are examples of random walks on $\mathbb{Z}$ with long-range jumps satisfying \eqref{eq:Nash-type-both}. 
By Murugan and Saloff-Coste \cite[Theorem 1.1]{MSC2015}, 
if there exist two constants  $C_3$ and $C_4$  such that the inequalities 
\[  C_3 \frac{(\log(e+|x-y|))^{2+\delta}}{(1+|x-y|)^2} \le p(x,y) = p(y,x) \le C_4 \frac{(\log(e+|x-y|))^{2+\delta}}{(1+|x-y|)^2}\]
hold for every $x, y \in \mathcal{X}$, 
then \eqref{eq:Nash-type-both} holds. \\

\noindent{\it Acknowledgments.} \ The author is supported by JSPS KAKENHI JP22K13928. \\

\bibliographystyle{plain}
\bibliography{MPR}

\begin{thebibliography}{10}

\bibitem{AK2020}
Inna~M. Asymont and Dmitry Korshunov.
\newblock Strong law of large numbers for a function of the local times of a
  transient random walk in {{\({\mathbb{Z}}^d\)}}.
\newblock {\em J. Theor. Probab.}, 33(4):2315--2336, 2020.

\bibitem{BK2009}
Mathias Becker and Wolfgang K\"onig.
\newblock Moments and distribution of the local times of a transient random
  walk on {$\Bbb Z^d$}.
\newblock {\em J. Theoret. Probab.}, 22(2):365--374, 2009.

\bibitem{CCMP2026}
Yinshan Chang, Qinwei Chen, Qian Meng, and Xue Peng.
\newblock Strong law of large numbers for a function of the local time of a
  transient random walk on a group.
\newblock {\em J. Theor. Probab.}, 39(1):16, 2026.
\newblock Id/No 7.

\bibitem{Derriennic1980}
Yves Derriennic.
\newblock Quelques applications du th\'eor\`eme ergodique sous-additif.
\newblock In {\em Conference on {R}andom {W}alks ({K}leebach, 1979)
  ({F}rench)}, volume~74 of {\em Ast\'erisque}, pages 183--201. Soc. Math.
  France, Paris, 1980.

\bibitem{ET1960}
P.~Erd\H{o}s and S.~J. Taylor.
\newblock Some problems concerning the structure of random walk paths.
\newblock {\em Acta Math. Acad. Sci. Hungar.}, 11:137--162, 1960.

\bibitem{Flatto1976}
Leopold Flatto.
\newblock The multiple range of two-dimensional recurrent walk.
\newblock {\em Ann. Probability}, 4(2):229--248, 1976.

\bibitem{hamana1997}
Yuji Hamana.
\newblock The fluctuation result for the multiple point range of two
  dimensional recurrent random walks.
\newblock {\em Ann. Probab.}, 25(2):598--639, 1997.

\bibitem{Hamana1998}
Yuji Hamana.
\newblock A remark on the multiple point range of two-dimensional random walks.
\newblock {\em Kyushu J. Math.}, 52(1):23--80, 1998.

\bibitem{HSC1993}
W.~Hebisch and L.~Saloff-Coste.
\newblock Gaussian estimates for {Markov} chains and random walks on groups.
\newblock {\em Ann. Probab.}, 21(2):673--709, 1993.

\bibitem{KN2018}
Takashi Kumagai and Chikara Nakamura.
\newblock Lamplighter random walks on fractals.
\newblock {\em J. Theor. Probab.}, 31(1):68--92, 2018.

\bibitem{Murugan2025}
Mathav Murugan.
\newblock Diffusions and random walks with prescribed sub-{G}aussian heat
  kernel estimates.
\newblock {\em Ann. Probab.}, to appear, 2025.

\bibitem{MSC2015}
Mathav Murugan and Laurent Saloff-Coste.
\newblock Transition probability estimates for long range random walks.
\newblock {\em New York J. Math.}, 21:723--757, 2015.

\bibitem{Okamura2014}
Kazuki Okamura.
\newblock On the range of random walk on graphs satisfying a uniform condition.
\newblock {\em ALEA, Lat. Am. J. Probab. Math. Stat.}, 11(2):341--357, 2014.

\bibitem{Pitt1974}
Joel~H. Pitt.
\newblock Multiple points of transient random walks.
\newblock {\em Proc. Am. Math. Soc.}, 43:195--199, 1974.

\bibitem{SCZ2016}
Laurent Saloff-Coste and Tianyi Zheng.
\newblock Random walks and isoperimetric profiles under moment conditions.
\newblock {\em Ann. Probab.}, 44(6):4133--4183, 2016.

\bibitem{Tessera2008}
Romain Tessera.
\newblock Large scale {S}obolev inequalities on metric measure spaces and
  applications.
\newblock {\em Rev. Mat. Iberoam.}, 24(3):825--864, 2008.

\bibitem{Trifomov1985}
V.~I. Trofimov.
\newblock Graphs with polynomial growth.
\newblock {\em Math. USSR, Sb.}, 51:405--417, 1985.

\bibitem{Woess2013}
Wolfgang Woess.
\newblock What is a horocyclic product, and how is it related to lamplighters?
\newblock {\em Int. Math. Nachr., Wien}, 224:1--27, 2013.

\end{thebibliography}
 
\end{document}